\numberwithin{equation}{section}
\newtheorem{thm}{Theorem}[section]
\newtheorem{defi}[thm]{{Definition}}
\newtheorem{cor}[thm]{{Corollary}}
\newtheorem{lem}[thm]{{Lemma}}
\newtheorem{prop}[thm]{Proposition}
\newtheorem{exa}[thm]{{Example}}
\newtheorem{note}[thm]{{Notation}}
\def\C{\mathscr C}
\def\Ac{\mathrm{Aut}_{\C}}
\def\Hom{\mathrm{Hom}}
\def\Homc{\Hom_{\C}}
\def\Ho{\Hom_{\C}^0}
\def\Id{\mathrm{Id}}
\def\Mor{\mathrm{Mor}}
\def\Obj{\mathrm{Obj}}
\def\id{\mathrm{id}}
\def\pd{\mathrm{pd}}
\def\lbr{\left(\begin{array}{c}}
\def\lbrt{\left(\begin{array}{cc}}
\def\lbrth{\left(\begin{array}{ccc}}
\def\rbr{\end{array}\right)}
\title[The tensor product of Gorenstein-projective modules]{The tensor product of Gorenstein-projective modules over category algebras}
\author{Ren Wang}
\keywords{finite
	EI category, category algebra, Gorenstein-projective module, tensor product, tensor triangulated category} \subjclass[2010]{Primary 16G10; Secondary 16D90, 18E30}
\address{School of Mathematical Sciences, University of Science and Technology of China, Hefei, Anhui 230026, P. R. China}
\email{renw@mail.ustc.edu.cn}
\date{\today}
\begin{document}
\begin{abstract}
For a finite free and projective EI category, we prove that Gorenstein-projective modules over its category algebra are closed under the tensor product if and only if each morphism in the given category is a monomorphism.
\end{abstract}

\maketitle

\section{Introduction}
Let $k$ be a field. Let $\C$ be a finite category, that is, it has
only finitely many morphisms, and consequently it has only finitely
many objects.  Denote by $k$-mod the category of finite
dimensional $k$-vector spaces and $(k\text{-mod})^{\C}$ the category of covariant functors from $\C$ to $k$-mod.

Recall that the category $k\C$-mod of left modules over the category algebra $k\C$ is identified with $(k\text{-mod})^{\C}$; see \cite[Proposition 2.1]{PWebb2}. The category $k\C$-mod is a symmetric monoidal category. More precisely, the tensor product -$\hat{\otimes}$- is defined by \[(M\hat{\otimes} N)(x)=M(x)\otimes_k N(x)\]
for any $M, N\in (k\text{-mod})^{\C}$ and $x\in {\rm Obj}\C$, and $\alpha.(m\otimes n)=\alpha.m\otimes \alpha.n$ for any $\alpha\in \Mor\C, m\in M(x), n\in N(x)$; see~\cite{XF2, XF3}. 

Let $\C$ be a finite EI category. Here, the EI condition means
that all endomorphisms in $\C$ are isomorphisms. In particular,
$\Homc(x,x)=\Ac(x)$ is a finite group for each object $x$. Denote by $k\Ac(x)$ the group algebra.
Recall that
a finite EI category $\C$ is \emph{projective over $k$} if each
$k{\rm Aut}_{\C}(y)$-$k{\rm Aut}_{\C}(x)$-bimodule $k{\rm Hom}_{\C}(x,y)$ is projective on both sides; see~\cite[Definition 4.2]{WR}.

 Denote by $k\C$-Gproj the full subcategory of $k\C$-mod consisting of Gorenstein-projective $k\C$-modules.
We say that $\C$ is \emph{GPT-closed}, if $X,Y\in k\C$-{\rm Gproj} implies  $X\hat{\otimes} Y\in k\C$-{\rm Gproj}.

Let us explain the motivation to study GPT-closed categories. Recall from \cite{WR2} that for a finite projective EI category $\C$, the stable category $k\C$-$\underline{\rm Gproj}$ modulo projective modules has a natural tensor triangulated structure such that it is tensor triangle equivalent to the singularity category of $k\C$. In general, its tensor product is not explicitly given. However, if $\C$ is GPT-closed, then the tensor product -$\hat{\otimes}$- on Gorenstein-projective modules induces the one on $k\C$-$\underline{\rm Gproj}$; see Proposition~\ref{T}. In this case, we have a better understanding of the tensor triangulated category $k\C$-$\underline{\rm Gproj}$.

\begin{prop}\label{NECE}
	Let $\C$ be a finite projective EI category. Assume that $\C$ is GPT-closed. Then each morphism in $\C$ is a monomorphism.
\end{prop}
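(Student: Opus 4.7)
The plan is to prove the contrapositive: assuming some morphism $\alpha\colon x\to y$ in $\C$ fails to be a monomorphism, I will exhibit two Gorenstein-projective $k\C$-modules whose tensor product is not Gorenstein-projective, contradicting the GPT-closed hypothesis. Fix parallel morphisms $\beta_1\neq \beta_2\colon z\to x$ with $\alpha\beta_1=\alpha\beta_2=:\gamma$. Since every projective $k\C$-module is Gorenstein-projective, the natural starting point is to tensor indecomposable projectives; I plan to work with $P:=k\Homc(z,-)$, the indecomposable projective at $z$, and to consider $T:=P\,\hat\otimes\, P$, which by the assumption would have to lie in $k\C$-{\rm Gproj}.

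Next I would describe $T$ explicitly. By the pointwise-diagonal formula, $T(w)=k\Homc(z,w)\otimes_k k\Homc(z,w)$, and a morphism $\delta\colon w\to w'$ acts on elementary tensors by $\delta.(f\otimes g)=(\delta f)\otimes(\delta g)$. In particular, the action of $\alpha$ sends all four elementary tensors $\beta_i\otimes \beta_j$ (for $i,j\in\{1,2\}$) to the single element $\gamma\otimes\gamma\in T(y)$; hence the differences $\beta_1\otimes\beta_1-\beta_1\otimes\beta_2$, $\beta_1\otimes\beta_2-\beta_2\otimes\beta_1$, and so on all sit in the kernel of the restriction map $T(x)\to T(y)$, giving $T$ an ``excess kernel'' along $\alpha$ that would vanish if $\alpha$ were a monomorphism.

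The main obstacle, and the heart of the argument, is to convert this excess kernel into a genuine structural obstruction to Gorenstein-projectivity. My preferred route is to invoke a local criterion, available for finite projective EI categories, that controls the restriction $M(x)\to M(y)$ of any $M\in k\C$-{\rm Gproj} via the $k\Ac(y)$-$k\Ac(x)$-bimodule structure of $k\Homc(x,y)$ (in the spirit of \cite{WR,WR2}); applying this criterion to $T$ together with the explicit kernel elements above should yield the desired contradiction. If no clean structural criterion is at hand, I would fall back on a cohomological test: build a projective presentation of $T$, read off the first syzygy, and argue that the elements manufactured from $\beta_1\neq\beta_2$ with $\alpha\beta_1=\alpha\beta_2$ produce a non-vanishing class in $\Ext^{i}_{k\C}(T,k\C)$ for some $i>0$. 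Either way, $T\notin k\C$-{\rm Gproj} would contradict GPT-closedness, forcing every morphism in $\C$ to be a monomorphism.
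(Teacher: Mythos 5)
Your overall strategy --- tensor indecomposable projectives and extract an injectivity obstruction from Gorenstein-projectivity --- is the right one, and it is the paper's strategy too. But the step you yourself flag as ``the heart of the argument'' is a genuine gap, and the specific criterion you gesture at would not close it. There is no local criterion controlling the kernel of the restriction map $M(x)\to M(y)$ along a \emph{single} morphism $\alpha$: that map fails to be injective already for the projective module $P=k\Homc(z,-)$ itself (both $\beta_1$ and $\beta_2$ map to $\gamma$), so the presence of your ``excess kernel'' elements in $T(x)\to T(y)$ is not, by itself, any obstruction to $T$ being Gorenstein-projective. The criterion that actually works (the paper's Corollary~\ref{GPN}, deduced from Zhang's theorem for triangular matrix algebras via Lemmas~\ref{C} and~\ref{GPE}) is of a different shape: for a Gorenstein-projective $\Gamma$-module $X$ whose lowest nonzero component sits at $x_s$, the structure map $M_{is}\otimes_{R_s}X_s\to X_i$ from the tensor product \emph{over the group algebra} $R_s=k\Ac(x_s)$ must be injective. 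Turning that into a statement about morphisms requires the set-theoretic basis descriptions of Lemmas~\ref{G-Prod} and~\ref{Prod}, none of which appears in your outline; the fallback ``cohomological test'' is not an argument.

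There is also a problem with your choice of module. You take $T=P\,\hat{\otimes}\,P$ with $P$ the projective at $z$ (the common source of $\beta_1,\beta_2$). The bottom nonzero component of $T$ is then $k\Ac(z)\otimes_k k\Ac(z)$ at $z$, and the bottom-row injectivity criterion applied to $T$ only yields left-cancellation of morphisms out of $z$ against \emph{automorphisms} of $z$ --- it never sees the pair $\beta_1\neq\beta_2$ composed with $\alpha$. Detecting the failure of mono-ness inside $P\,\hat{\otimes}\,P$ would force you to verify injectivity of the full map $\varphi_t^{**}$ of Lemma~\ref{GPE} at the level of the \emph{target} of $\alpha$, a tensor product over the whole lower-right corner algebra, which in the non-free case has no clean basis description. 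The paper instead tensors the projective at the source of $\alpha$ with the projective at the source of the $\beta_i$: writing $x=x_p$, $z=x_q$, the module $C_p\,\hat{\otimes}\,C_q$ has bottom nonzero component $R_p\otimes_k M_{pq}$ at $x_p$, and injectivity of
$M_{ip}\otimes_{R_p}(R_p\otimes_k M_{pq})\to M_{ip}\otimes_k M_{iq}$, read on the bases $\Homc(x_p,x_i)\times\Homc(x_q,x_p)$ and $\Homc(x_p,x_i)\times\Homc(x_q,x_i)$, is literally the statement that every $\alpha\in\Homc(x_p,x_i)$ cancels on the left against morphisms from $x_q$. You would need to redo your argument with this choice of tensor factors and with the correct injectivity criterion in hand.
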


The concept of a finite \emph{free} EI category is introduced in \cite{LLi}.

\begin{thm}
	Let $\C$ be a finite projective and free EI category. Then the category $\C$ is $GPT$-closed if and only if each morphism in $\C$ is a monomorphism.
\end{thm}

\section{Gorenstein triangular matrix algebras}
In this section, we recall some necessary preliminaries on triangular matrix algebras and Gorenstein-projective modules.

 Recall an $n\times n$ upper
 triangular matrix algebra $\Gamma=\left(
 \begin{array}{cccc}
 R_1 & M_{12} & \cdots & M_{1n} \\
 & R_2 & \cdots & M_{2n} \\
 &  & \ddots & \vdots \\
 &  &  & R_n \\
 \end{array}
 \right)$, where each $R_i$ is an algebra for $1\leq i\leq n$, each $M_{ij}$ is an $R_i$-$R_j$-bimodule for $1\leq i< j\leq n$, and the matrix algebra map is denoted by $\psi_{ilj}: M_{il}\otimes_{R_l} M_{lj} \rightarrow M_{ij}$ for $1\leq i<l<j<t\leq n$; see~\cite{WR}. 
 
 Recall that a left $\Gamma$-module $X=\lbr X_1 \\
 \vdots \\ X_n \\ \rbr$ is described by a column vector, where each $X_i$ is a left $R_i$-module for $1\leq i\leq n$, and the left $\Gamma$-module structure map is denoted by $\varphi_{jl}:
 M_{jl}\otimes_{R_l} X_l \rightarrow X_j$ for $1\leq j<l\leq n$; see~\cite{WR}. Dually, we have the description
 of right $\Gamma$-modules via row vectors. 

\begin{note}\label{B}
	Let $\Gamma_t$ be the algebra given by the $t\times t$ leading
	principal
	submatrix of $\Gamma$ and $\Gamma'_{n-t}$ be the algebra given by cutting the first $t$ rows and the first $t$ columns of $\Gamma$. Denote the left
	$\Gamma_t$-module $\lbr M_{1,t+1}\\
	\vdots \\ M_{t,t+1}\\ \rbr$ by $M_t^*$ and the right
	$\Gamma'_{n-t}$-module $\lbr M_{t,t+1},
	\cdots, M_{tn} \rbr$ by $M_t^{**}$, for $1\leq t\leq n-1$. Denote by $\Gamma_t^D$= {\rm diag}$\lbr R_1,\cdots R_t \rbr$ the sub-algebra of
	$\Gamma_t$ consisting of diagonal matrices, and $\Gamma'_{D,n-t}$= {\rm diag}$\lbr R_{t+1},\cdots R_n \rbr$ the sub-algebra of
	$\Gamma'_{n-t}$ consisting of diagonal matrices.
\end{note}

Let $\Gamma=\lbrt R_1& M_{12}\\ 0 & R_2 \rbr$ be an upper triangular matrix algebra. Recall that the $R_1$-$R_2$-bimodule $M_{12}$ is \emph{compatible}, if the following two conditions hold; see\cite[Definition 1.1]{Z}:
\begin{enumerate}
	\item [(C1)] If $Q^{\bullet}$ is an exact sequence of projective $R_2$-modules, then $M_{12}\otimes_{R_2}Q^{\bullet}$ is exact;
	\item [(C2)] If $P^{\bullet}$ is a complete $R_1$-projective resolution, then $\Hom_{R_1}(P^{\bullet},M_{12})$ is exact.
\end{enumerate}

We use $\pd$ and $\id$ to denote the projective dimension and the injective dimension of a module, respectively.

\begin{lem}\label{C}
Let $\Gamma$ be an upper triangular
matrix algebra satisfying all $R_i$ Gorenstein. If $\Gamma$ is Gorenstein, then each $\Gamma_t$-$R_{t+1}$-bimodule $M^*_t$ is compatible and each $R_t$-$\Gamma'_{n-t}$-bimodule $M^{**}_t$ is compatible for $1 \leq t\leq n-1$.
\end{lem}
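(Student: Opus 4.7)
The plan is to identify the lemma as a consequence of the standard $2\times 2$ characterization of Gorenstein upper triangular matrix algebras (Zhang's theorem from \cite{Z}), applied to two natural block decompositions of $\Gamma$. For each $1\leq t\leq n-1$ one has
\[\Gamma_{t+1} = \lbrt \Gamma_t & M^*_t \\ 0 & R_{t+1} \rbr, \qquad \Gamma'_{n-t+1} = \lbrt R_t & M^{**}_t \\ 0 & \Gamma'_{n-t} \rbr,\]
so that the compatibility assertion for $M^*_t$ (resp.\ $M^{**}_t$) is precisely the $2\times 2$ compatibility statement attached to the algebra $\Gamma_{t+1}$ (resp.\ $\Gamma'_{n-t+1}$). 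Zhang's theorem asserts that a $2\times 2$ upper triangular matrix algebra with Gorenstein diagonal is Gorenstein if and only if the off-diagonal bimodule is compatible and has finite projective dimension on both sides, so the forward direction of that theorem applied to these two algebras delivers exactly what we need.

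The main obstacle is to argue that the sub-algebras $\Gamma_{t+1}$ and $\Gamma'_{n-t+1}$ are themselves Gorenstein, since idempotent truncation does not in general preserve Gorensteinness. I would handle this in one of two ways. First, one can invoke the characterization of Gorenstein upper triangular matrix algebras developed in \cite{WR}, in which Gorensteinness of $\Gamma$ is encoded by a list of conditions (finite projective dimension of each block bimodule on both sides, together with compatibility of the cumulative bimodules $M^*_s$ and $M^{**}_s$) that restricts cleanly to every leading or trailing principal sub-block; consequently $\Gamma_{t+1}$ and $\Gamma'_{n-t+1}$ inherit the Gorenstein property from $\Gamma$. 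Alternatively, one can proceed by induction on $n$: applying the $n=2$ Zhang criterion to $\Gamma = \lbrt \Gamma_{n-1} & M^*_{n-1} \\ 0 & R_n \rbr$ together with its trailing analogue reduces the Gorensteinness of each sub-block to the inductive hypothesis and the fact that the outermost bimodules $M^*_{n-1}$ and $M^{**}_1$ acquire the required finite projective dimensions and compatibility directly from the Gorensteinness of $\Gamma$.

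Once the sub-block Gorensteinness is in place, Zhang's criterion applied to $\Gamma_{t+1}$ immediately gives compatibility of $M^*_t$ as a $\Gamma_t$-$R_{t+1}$-bimodule, and applied to $\Gamma'_{n-t+1}$ gives compatibility of $M^{**}_t$ as an $R_t$-$\Gamma'_{n-t}$-bimodule. I expect the only genuine technical step to be the verification of the sub-block Gorensteinness, while the remainder is an immediate unpacking of the $2\times 2$ characterization.
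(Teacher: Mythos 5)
Your argument is correct and ultimately rests on the same external inputs as the paper, but it is organized differently. The paper's proof is more direct: it quotes \cite[Proposition 1.3]{Z} (a bimodule of finite projective dimension on both sides is compatible), then \cite[Proposition 3.4]{WR} ($\Gamma$ is Gorenstein iff each individual $M_{ij}$ is finitely generated of finite projective dimension over $R_i$ and over $R_j$), which immediately gives $\pd(M^*_t)_{R_{t+1}}<\infty$ and $\pd_{R_t}(M^{**}_t)<\infty$, and finally \cite[Lemma 3.1]{WR} to upgrade these to $\pd_{\Gamma_t}(M^*_t)<\infty$ and $\pd(M^{**}_t)_{\Gamma'_{n-t}}<\infty$. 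In particular the paper never needs the sub-blocks $\Gamma_t$ and $\Gamma'_{n-t}$ to be Gorenstein, so the step you single out as ``the main obstacle'' simply does not arise in its treatment. Your detour through the $2\times 2$ re-blocking is nevertheless legitimate: the finite-projective-dimension characterization of \cite[Proposition 3.4]{WR} visibly restricts to every leading and trailing principal sub-block, so $\Gamma_t$, $\Gamma_{t+1}$, $\Gamma'_{n-t}$ and $\Gamma'_{n-t+1}$ are indeed Gorenstein, and the forward direction of the ($n=2$ case of the) same characterization then yields finite projective dimension of $M^*_t$ over $\Gamma_t$ and over $R_{t+1}$, whence compatibility by \cite[Proposition 1.3]{Z}. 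Two cautions on attribution and precision: the ``Zhang theorem'' you invoke is not literally in \cite{Z} in that form --- what \cite{Z} supplies is the implication from finite projective dimension to compatibility, while the Gorenstein characterization is \cite[Proposition 3.4]{WR}; and your description of that characterization as including ``compatibility of the cumulative bimodules $M^*_s$ and $M^{**}_s$'' is inaccurate --- if compatibility of these were among the listed conditions, the lemma would be a tautology. Neither slip breaks the argument, but the paper's route is shorter and sidesteps both issues.
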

\begin{proof}
Let $\Lambda=\lbrt S_1& N_{12}\\ 0 & S_2 \rbr$ be an upper triangular
matrix algebra. Recall the fact that if $\pd_{S_1}(N_{12})<\infty$ and $\pd(N_{12})_{S_2}<\infty$, then $N_{12}$ is compatible; see~\cite[Proposition 1.3]{Z}. Recall that $\Gamma$ is Gorenstein if and only if all bimodules $M_{ij}$ are finitely generated and have finite projective dimension on both sides; see~\cite[Proposition 3.4]{WR}. Then we have $\pd_{R_t}(M^{**}_t)<\infty$ and  $\pd(M^*_t)_{R_{t+1}}<\infty$ for $1 \leq t\leq n-1$. By \cite[Lemma 3.1]{WR}, we have $\pd(M^{**}_t)_{\Gamma'_{n-t}}<\infty$ and  $\pd_{\Gamma_t}(M^*_t)<\infty$ for $1 \leq t\leq n-1$. Then we are done.
\end{proof}
 
Let $A$ be a finite dimensional algebra over a field $k$. Denote by $A$-mod the category of finite dimensional left $A$-modules. The opposite algebra of $A$ is denoted by $A^{\rm op}$. We identify right $A$-modules with left $A^{\rm op}$-modules.

Denote by $(-)^*$ the contravariant functor ${\rm Hom}_A(-,A)$ or ${\rm Hom}_{A^{\rm op}}(-,A)$. Let $X$ be a left $A$-module. Then $X^*$ is a right $A$-module and $X^{**}$ is a left $A$-module. There is an evaluation map ${\rm ev}_X: X\rightarrow X^{**}$ given by ${\rm ev}_X(x)(f)=f(x)$ for $x\in X$ and $f\in X^*$. Recall that an $A$-module $G$ is \emph{Gorenstein-projective} provided that ${\rm Ext}^i_A(G,A)=0={\rm Ext}^i_{A^{\rm op}}(G^*,A)$ for $i\geq 1$ and the evaluation map ${\rm ev}_G$ is bijective; see~\cite[Proposition 3.8]{AB}. 

The algebra $A$ is \emph{Gorenstein} if ${\rm id}_A A<\infty$ and ${\rm id} A_A<\infty$. It is well known that for a Gorenstein algebra $A$ we have ${\rm id}_A A={\rm id} A_A$; see~\cite[Lemma A]{Zaks}. For $m\geq 0$, a Gorenstein algebra $A$ is \emph{$m$-Gorenstein} if ${\rm id}_A A={\rm id} A_A\leq m$.
Denote by $A$-Gproj the full subcategory of $A$-mod consisting of Gorenstein-projective $A$-modules, and $A$-proj the full subcategory of $A$-mod consisting of projective $A$-modules.

The following lemma is well known; see~\cite[Propositions 3.8 and 4.12 and Theorem 3.13]{AB}.

\begin{lem}\label{XL}
	Let $m\geq 0$. Let $A$ be an $m$-Gorenstein algebra. Then we have the following statements.
	\begin{enumerate}
		\item  An $A$-module $M\in A$-{\rm Gproj} if and only if ${\rm Ext}^i_A(M,A)=0$ for all $i>0$.
	    \item  If $M\in A$-{\rm Gproj} and $L$ is a right $A$-module with finite projective dimension, then ${\rm Tor}^A_i(L,M)=0$ for all $i>0$.
	    \item If there is an exact sequence
	    $0\rightarrow M\rightarrow G_0\rightarrow G_1\rightarrow \cdots \rightarrow G_m$
	    with $G_i$ Gorenstein-projective, then $M\in A$-{\rm Gproj}.  \hfill $\square$
	\end{enumerate}
\end{lem}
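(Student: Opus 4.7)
The plan is to handle the three parts in order, with (1) feeding into the proofs of (2) and (3). The forward direction of (1) is immediate from the definition of a Gorenstein-projective module. The reverse direction of (1) is the main work—and the expected obstacle—since it is where the finite injective dimension of $A$ on both sides must be used essentially; parts (2) and (3) then follow by standard dimension-shifting arguments that also rely on the $m$-Gorenstein hypothesis.

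For the reverse direction of (1), I would assume $\Ext^i_A(M,A)=0$ for all $i>0$ and build a complete projective resolution of $M$ by splicing. The left half is any projective resolution $\cdots\to P_1\to P_0\to M\to 0$; applying $\Hom_A(-,A)$ gives $0\to M^*\to P_0^*\to P_1^*\to\cdots$, which is exact by hypothesis. For the right half, I would take a projective resolution $\cdots\to Q_1\to Q_0\to M^*\to 0$ of the right $A$-module $M^*$ and dualise to $0\to M^{**}\to Q_0^*\to Q_1^*\to\cdots$. To verify both that this sequence is exact and that ${\rm ev}_M$ is bijective, let $C_k=\mathrm{im}(P_{k-1}^*\to P_k^*)$; the short exact sequences coming from the left-half dualisation iterate to $\Ext^j_{A^{\rm op}}(M^*,A)\cong \Ext^{j+k}_{A^{\rm op}}(C_k,A)$ for every $k\geq 0$ and $j>0$, so the $m$-Gorenstein bound ${\rm id}\,A_A\leq m$ (applied with $j+k>m$) forces $\Ext^j_{A^{\rm op}}(M^*,A)=0$. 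A parallel dimension shift via Auslander's transpose then shows that ${\rm ev}_M$ is bijective, and splicing at $M$ completes the complete projective resolution.

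For (2), I would apply (1) to obtain a complete projective resolution $\cdots\to P_1\to P_0\to P^0\to P^1\to\cdots$ of $M$ and set $M^n=\ker(P^n\to P^{n+1})$, which is itself Gorenstein-projective. The short exact sequences $0\to M^n\to P^n\to M^{n+1}\to 0$, combined with $\mathrm{Tor}^A_i(L,P^n)=0$ for $i>0$, yield the dimension shift $\mathrm{Tor}^A_i(L,M)\cong \mathrm{Tor}^A_{i+n}(L,M^n)$ for every $n\geq 0$ and $i>0$; taking $n>\pd L$ then kills the right-hand side.

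For (3), I would set $N_0=M$ and $N_i=\mathrm{im}(G_{i-1}\to G_i)$ for $1\leq i\leq m$, extracting from the given sequence the short exact sequences $0\to N_{i-1}\to G_{i-1}\to N_i\to 0$ for $1\leq i\leq m$. The long exact sequences of $\Ext^*_A(-,A)$, combined with $\Ext^j_A(G_{i-1},A)=0$ for $j>0$, iterate to $\Ext^i_A(M,A)\cong \Ext^{i+m}_A(N_m,A)$ for all $i\geq 1$. Since $A$ is $m$-Gorenstein, ${\rm id}_A A\leq m$ forces the right-hand side to vanish, and (1) then yields $M\in A$-{\rm Gproj}.
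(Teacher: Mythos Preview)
The paper does not actually prove this lemma: it is stated as well known and attributed to Auslander--Bridger \cite[Propositions 3.8 and 4.12 and Theorem 3.13]{AB}, with the $\square$ indicating that the proof is omitted. So there is no argument in the paper to compare your proposal against.

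That said, your sketch is a correct unpacking of the standard arguments one would find in or extract from the cited source. The dimension-shifting in (2) and (3) is exactly right. In (1), the dimension shift $\Ext^j_{A^{\rm op}}(M^*,A)\cong \Ext^{j+k}_{A^{\rm op}}(C_k,A)$ is correct and, together with ${\rm id}\,A_A\leq m$, gives the vanishing you need; the only step you leave terse is the bijectivity of ${\rm ev}_M$. Your reference to the transpose is the right idea: the Auslander--Bridger exact sequence $0\to \Ext^1_{A^{\rm op}}(\mathrm{Tr}\,M,A)\to M\xrightarrow{{\rm ev}_M} M^{**}\to \Ext^2_{A^{\rm op}}(\mathrm{Tr}\,M,A)\to 0$ reduces the question to the vanishing of $\Ext^{1,2}_{A^{\rm op}}(\mathrm{Tr}\,M,A)$, and this follows from the same dimension shift applied to the exact sequence $0\to M^*\to P_0^*\to P_1^*\to \mathrm{Tr}\,M\to 0$ combined with the vanishing of $\Ext^{>0}_{A^{\rm op}}(M^*,A)$ you already established. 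With that small expansion, your argument for (1) is complete, and it then feeds into (2) and (3) as you outline.
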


\begin{lem}\cite[Theorem 1.4]{Z}\label{ZGP}
	Let $M_{12}$ be a compatible $R_1$-$R_2$-bimodule, and $\Gamma=\lbrt R_1& M_{12}\\ 0 & R_2 \rbr$. Then $X=\lbr X_1\\
	X_2\rbr \in \Gamma$-{\rm Gproj} if and only if $\varphi_{12}:
	M_{12}\otimes_{R_2} X_2 \rightarrow X_1$ is an injective $R_1$-map, $\rm{Coker} \varphi_{12}\in R_1$-{\rm Gproj}, and $X_2\in R_2$-{\rm Gproj}.\hfill $\square$
\end{lem}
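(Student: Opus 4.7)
The plan is to obtain the characterization by gluing complete projective resolutions over $R_1$ and $R_2$ through the compatible bimodule $M_{12}$, with the two compatibility conditions (C1) and (C2) playing dual roles.

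First I would record the structure of projective $\Gamma$-modules. Every indecomposable $\Gamma$-projective is either $\lbr P \\ 0 \rbr$ with $P$ an $R_1$-projective, or $\lbr M_{12}\otimes_{R_2} Q \\ Q \rbr$ with $Q$ an $R_2$-projective and structure map the canonical identity. Consequently a general $\Gamma$-projective $T = \lbr P \oplus (M_{12}\otimes_{R_2} Q) \\ Q \rbr$ has second row $T_2 = Q$ projective over $R_2$ and $\mathrm{Coker}\,\varphi_T = P$ projective over $R_1$. Moreover, for any $Y = \lbr Y_1 \\ Y_2\rbr$ there is a natural description of $\Hom_{\Gamma}(Y, \Gamma)$: the part landing in $\lbr R_1\\ 0\rbr$ factors through $\Hom_{R_1}(\mathrm{Coker}\,\varphi_Y, R_1)$, while the part landing in $\lbr M_{12}\\ R_2\rbr$ assembles from $\Hom_{R_2}(Y_2, R_2)$ and the tensor-hom adjunction for $M_{12}$. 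This decomposition is the bridge between the $\Gamma$-level and the row-level algebras.

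For the ``if'' direction, assume the three stated conditions. Choose a complete projective resolution $Q^\bullet$ of $X_2$ over $R_2$ and a complete projective resolution $P^\bullet$ of $\mathrm{Coker}\,\varphi_{12}$ over $R_1$. By compatibility (C1), the complex $M_{12}\otimes_{R_2} Q^\bullet$ is exact, with $M_{12}\otimes_{R_2} X_2$ at the zero cycle. Using the injection $\varphi_{12}$ and a horseshoe-type argument on $0 \to M_{12}\otimes_{R_2} X_2 \to X_1 \to \mathrm{Coker}\,\varphi_{12} \to 0$, I would splice $P^\bullet$ and $M_{12}\otimes_{R_2} Q^\bullet$ into a complex of $R_1$-projectives whose degree-$0$ cycle is $X_1$. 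Packaging row-wise with $Q^\bullet$ yields a complex $T^\bullet$ of $\Gamma$-projectives, $T^i = \lbr P^i \oplus (M_{12}\otimes Q^i) \\ Q^i\rbr$, exact with $X = Z^0(T^\bullet)$. To upgrade $T^\bullet$ to a totally acyclic complex, I would apply $\Hom_{\Gamma}(-, \Gamma)$: by the first-paragraph decomposition, exactness reduces to the total acyclicity of $P^\bullet$ over $R_1$ and of $Q^\bullet$ over $R_2$, together with exactness of a cross term of the form $\Hom_{R_2}(Q^\bullet, \Hom_{R_1}(M_{12}, R_1))$, which is delivered by condition (C2) via the adjunction.

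For the ``only if'' direction, assume $X \in \Gamma$-{\rm Gproj} with a totally acyclic complex $T^\bullet$ of $\Gamma$-projectives, writing $T^i = \lbr P^i \oplus (M_{12}\otimes Q^i) \\ Q^i \rbr$. Taking second rows gives an exact complex $Q^\bullet$ of $R_2$-projectives with $X_2$ at the zero cycle; Hom-exactness into $R_2$ falls out of the decomposition of $\Hom_{\Gamma}(T^\bullet, \Gamma)$, so $X_2 \in R_2$-{\rm Gproj}. A diagram chase on $T^\bullet$ using the snake lemma shows that the structure map $\varphi_{12}$ is injective and that passing to cokernels of row structure maps produces an exact complex $P^\bullet$ of $R_1$-projectives with $\mathrm{Coker}\,\varphi_{12}$ at the zero cycle; its total acyclicity over $R_1$ again follows from the decomposition of $\Hom_{\Gamma}(T^\bullet, \Gamma)$, giving $\mathrm{Coker}\,\varphi_{12} \in R_1$-{\rm Gproj}.

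The main obstacle is the Hom-exactness bookkeeping. One must correctly identify $\Hom_{\Gamma}(T^\bullet, \Gamma)$ in terms of row-level Hom complexes so that total acyclicity over $\Gamma$ is equivalent to the three component conditions. Both compatibility conditions enter here in their natural dual roles, (C1) controlling the tensor step that builds $X_1$ from $X_2$, and (C2) controlling the $\Hom$ step that checks the dual acyclicity.
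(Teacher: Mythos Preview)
The paper does not give a proof of this lemma; it is quoted from \cite[Theorem~1.4]{Z} and closed with a $\square$. Your proposal is a faithful outline of Zhang's original argument: assemble (respectively, disassemble) a totally acyclic complex of $\Gamma$-projectives from (into) totally acyclic complexes over $R_1$ and $R_2$, using the structure of indecomposable $\Gamma$-projectives together with (C1) to control exactness of $M_{12}\otimes_{R_2} Q^\bullet$ and (C2), via tensor--Hom adjunction, to control exactness of the dual complex. One small omission worth flagging: in the ``only if'' direction you also need (C1) explicitly. Without it you do not know that $M_{12}\otimes_{R_2} Q^\bullet$ is exact, and hence the snake-lemma chase does not identify $Z^0(M_{12}\otimes_{R_2} Q^\bullet)$ with $M_{12}\otimes_{R_2} X_2$; it is this identification that yields the injectivity of $\varphi_{12}$ and that $\mathrm{Coker}\,\varphi_{12} = Z^0(P^\bullet)$. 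You invoke (C1) only in the ``if'' direction, but it is needed in both.
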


We have a slight generalization of Lemma~\ref{ZGP}.
\begin{lem}\label{GPE}
	Let $\Gamma$ be a Gorenstein upper triangular
	matrix algebra with each $R_i$ a group algebra. Then $X=\lbr X_1\\ \vdots \\
	X_n\rbr \in \Gamma$-{\rm Gproj} if and only if each $R_t$-map $\varphi_t^{**}=\sum\limits_{j=t+1}^{n}\varphi_{tj}: M_t^{**}\otimes_{\Gamma'_{n-t}} \lbr X_{t+1}\\ \vdots \\
	X_n\rbr \rightarrow X_t$ sending $\lbr m_{t,t+1},
	\cdots, m_{tn} \rbr\otimes \lbr x_{t+1}\\ \vdots \\
	x_n\rbr$ to $\sum\limits_{j=t+1}^{n}\varphi_{tj}( m_{tj}\otimes x_j)$ is injective for $1\leq t\leq n-1$.
\end{lem}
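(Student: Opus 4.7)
The plan is to induct on $n$, repeatedly stripping off the top row by means of the $2\times 2$ block decomposition
\[
\Gamma \;=\; \begin{pmatrix} R_1 & M_1^{**} \\ 0 & \Gamma'_{n-1} \end{pmatrix}
\]
and applying Lemma~\ref{ZGP}. The crucial simplifying input is that each $R_i$ is a group algebra, hence self-injective; over a self-injective Artin algebra every finitely generated module is Gorenstein-projective, so the ``$\mathrm{Coker}$ is Gorenstein-projective'' condition appearing in Lemma~\ref{ZGP} will be automatic at every stage. Compatibility of $M_1^{**}$ as an $R_1$-$\Gamma'_{n-1}$-bimodule is provided by Lemma~\ref{C}, and one checks via \cite[Proposition 3.4]{WR} that $\Gamma'_{n-1}$ inherits the Gorenstein property from $\Gamma$, since its diagonal entries and off-diagonal bimodules form a sub-collection of those of $\Gamma$; thus the inductive hypothesis applies to $\Gamma'_{n-1}$.

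The base case $n=1$ is vacuous. For $n=2$, Lemma~\ref{ZGP} says $X \in \Gamma$-\mathrm{Gproj} iff $\varphi_{12}$ is injective, $\mathrm{Coker}\,\varphi_{12} \in R_1$-\mathrm{Gproj}, and $X_2 \in R_2$-\mathrm{Gproj}; the last two are automatic as noted, so only injectivity of $\varphi_1^{**} = \varphi_{12}$ remains. For the inductive step, view $X$ through the $2\times 2$ decomposition as a $\Gamma$-module with top component $X_1$ and bottom component $(X_2,\ldots,X_n)^T$, whose structure map $M_1^{**} \otimes_{\Gamma'_{n-1}} (X_2,\ldots,X_n)^T \to X_1$ is precisely $\varphi_1^{**}$ by the definition of the structure maps in Notation~\ref{B}. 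Lemma~\ref{ZGP} then gives $X \in \Gamma$-\mathrm{Gproj} iff $\varphi_1^{**}$ is injective, $\mathrm{Coker}\,\varphi_1^{**} \in R_1$-\mathrm{Gproj} (automatic), and $(X_2,\ldots,X_n)^T \in \Gamma'_{n-1}$-\mathrm{Gproj}. The induction hypothesis, applied to $\Gamma'_{n-1}$, converts the last condition into injectivity of the analogous maps for $t=2,\ldots,n-1$, which are exactly the maps $\varphi_t^{**}$ from the statement.

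The only real obstacle is bookkeeping: one must verify that the structure map of $(X_2,\ldots,X_n)^T$ as a $\Gamma'_{n-1}$-module, when expanded according to Notation~\ref{B} for $\Gamma'_{n-1}$, yields for each $t\geq 2$ exactly the map $\varphi_t^{**}$ described in the statement for $\Gamma$. Since $\Gamma'_{n-1}$ is obtained by deleting the first row and column of $\Gamma$, the bimodules $M_{ij}$ and structure components $\varphi_{ij}$ with $i,j\geq 2$ are literally the same, and the rows/columns of $\Gamma'_{n-1}$ are indexed consistently, so this identification is direct. Combining the two injectivity conditions completes the induction and gives the claimed equivalence.
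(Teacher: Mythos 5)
Your proposal is correct and follows essentially the same route as the paper: induction on $n$ via the $2\times 2$ decomposition $\Gamma=\left(\begin{smallmatrix} R_1 & M_1^{**}\\ 0 & \Gamma'_{n-1}\end{smallmatrix}\right)$, Lemma~\ref{ZGP} with compatibility from Lemma~\ref{C}, and self-injectivity of the group algebras $R_i$ to make the cokernel and bottom-corner conditions automatic. You even make explicit two points the paper leaves implicit, namely that $\Gamma'_{n-1}$ inherits the Gorenstein property and that the $\Gamma'_{n-1}$-structure maps restrict to the stated $\varphi_t^{**}$.
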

\begin{proof}
	We have that each $R_t$-$\Gamma'_{n-t}$-bimodule $M^{**}_t$ is compatible for $1 \leq t\leq n-1$ by Lemma~\ref{C}.
	
	For the ``only if" part, we use induction on $n$. The case $n=2$ is due to Lemma~\ref{ZGP}. 
	Assume that $n>2$. Write $\Gamma=\lbrt R_1 & M^{**}_1\\ 0 & \Gamma'_{n-1} \rbr$, and $X=\lbr X_1\\
	X'\rbr $. Since $X\in \Gamma$-{\rm Gproj}, by Lemma~\ref{ZGP}, we have that the $R_1$-map $\varphi_1^{**}: M_1^{**}\otimes_{\Gamma'_{n-1}} X' \rightarrow X_1$ is injective and $X'\in \Gamma'_{n-1}$-{\rm Gproj}. By induction, we have that  each $R_t$-map $\varphi_t^{**}: M_t^{**}\otimes_{\Gamma'_{n-t}} \lbr X_{t+1}\\ \vdots \\
	X_n\rbr \rightarrow X_t$ is injective for $1\leq t\leq n-1$.
	
	For the ``if" part, we use induction on $n$. The case $n=2$ is due to Lemma~\ref{ZGP}. Assume that $n>2$.  Write $\Gamma=\lbrt R_1 & M^{**}_1\\ 0 & \Gamma'_{n-1} \rbr$, and $X=\lbr X_1\\
	X'\rbr $. By induction, we have $X'\in \Gamma'_{n-1}$-{\rm Gproj}. Since  the $R_1$-map $\varphi_1^{**}: M_1^{**}\otimes_{\Gamma'_{n-1}} X' \rightarrow X_1$ is injective and its cokernel belongs to $R_1$-{\rm Gproj} for $R_1$ a group algebra, we have $X\in \Gamma$-{\rm Gproj} by Lemma~\ref{ZGP}.
\end{proof}

\begin{cor}\label{GPN}
	Let $\Gamma$ be a Gorenstein upper triangular
	matrix algebra with each $R_i$ a group algebra. Assume that $X=\lbr X_1\\ \vdots \\ X_s \\ 0 \\ \vdots \\ 0\rbr \in \Gamma$-{\rm Gproj}. Then each $R_i$-map $\varphi_{is}: M_{is}\otimes_{R_s} X_s \rightarrow X_i$ is injective for $1\leq i<s\leq n$.	
\end{cor}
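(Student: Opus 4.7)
The plan is to reduce $X$ to a Gorenstein-projective module over the smaller algebra $\Gamma_s$, and then apply Lemma \ref{ZGP} once more, reading off each $\varphi_{is}$ as a coordinate of a single column-vector map.

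First I would show, by descending induction on $j$ from $n$ down to $s$, that $Y_j := \lbr X_1\\ \vdots \\ X_j \rbr$ belongs to $\Gamma_j$-{\rm Gproj}. The case $j = n$ is the hypothesis. For the step from $j$ to $j-1$ (where $j-1 \geq s$, so $X_j = 0$), decompose $\Gamma_j = \lbrt \Gamma_{j-1}& M_{j-1}^*\\ 0 & R_j \rbr$. Since $\Gamma$ is Gorenstein, so is $\Gamma_j$ (by the bimodule projective-dimension criterion \cite[Proposition 3.4]{WR}), and Lemma \ref{C} then ensures that $M_{j-1}^*$ is compatible. Now $Y_j = \lbr Y_{j-1} \\ 0 \rbr$, and the structure map $M_{j-1}^* \otimes_{R_j} 0 \to Y_{j-1}$ is the zero map with cokernel $Y_{j-1}$; the ``only if'' direction of Lemma \ref{ZGP} therefore yields $Y_{j-1} \in \Gamma_{j-1}$-{\rm Gproj}.

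Arriving at $j = s$, we have $Y := Y_s \in \Gamma_s$-{\rm Gproj}. Applying Lemma \ref{ZGP} one last time to the decomposition $\Gamma_s = \lbrt \Gamma_{s-1}& M_{s-1}^*\\ 0 & R_s \rbr$ (with $M_{s-1}^*$ compatible by Lemma \ref{C}) yields that the $\Gamma_{s-1}$-map
\[
\varphi: M_{s-1}^* \otimes_{R_s} X_s \longrightarrow \lbr X_1\\ \vdots \\ X_{s-1}\rbr
\]
is injective. Since $M_{s-1}^* = \lbr M_{1s} \\ \vdots \\ M_{s-1,s}\rbr$ is a column of $R_i$-$R_s$-bimodules and the tensor product distributes over the direct-sum decomposition, the underlying $k$-linear map of $\varphi$ is the direct sum $\bigoplus_{i=1}^{s-1} \varphi_{is}$. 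Injectivity of such a direct sum forces injectivity of every summand, so each $\varphi_{is}$ is injective for $1 \leq i < s$, as desired.

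The main obstacle is the bookkeeping needed to apply Lemma \ref{ZGP} at each stage, i.e., verifying that the column bimodules $M_{j-1}^*$ are compatible as $\Gamma_{j-1}$-$R_j$-bimodules. This is secured by Lemma \ref{C} together with the fact that Gorensteinness passes from $\Gamma$ to every leading principal submatrix algebra $\Gamma_j$; the hypothesis that each $R_i$ is a group algebra supplies the Gorensteinness of the diagonal blocks required by that lemma.
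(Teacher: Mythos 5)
Your proof is correct, but it takes a genuinely different route from the paper's. The paper works with the \emph{row} decomposition $\Gamma=\lbrt R_1 & M_1^{**}\\ 0 & \Gamma'_{n-1}\rbr$: it first establishes Lemma~\ref{GPE} (injectivity of the maps $\varphi_i^{**}$ defined on $M_i^{**}\otimes_{\Gamma'_{n-i}}X''$), then factors $\varphi_{is}=\varphi_i^{**}\circ f_{is}$ and must separately prove $f_{is}$ injective; that last step is the delicate one, requiring a splitting for $g_{is}$ and a ${\rm Tor}_1$-vanishing argument (Lemma~\ref{XL}(2)) to see that $\iota\otimes{\rm Id}$ stays injective after tensoring over the big algebra $\Gamma'_{n-i}$. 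You instead exploit the vanishing of the lower components to descend to $Y_s\in\Gamma_s\text{-Gproj}$ (each peeling step is a legitimate application of the ``only if'' direction of Lemma~\ref{ZGP} with second component zero), and then use the \emph{column} decomposition $\Gamma_s=\lbrt\Gamma_{s-1}&M_{s-1}^*\\ 0& R_s\rbr$; since $-\otimes_{R_s}X_s$ distributes over the direct-sum decomposition of the column $M_{s-1}^*=\lbr M_{1s}\\ \vdots\\ M_{s-1,s}\rbr$, the single structure map handed to you by Lemma~\ref{ZGP} is literally $\bigoplus_{i<s}\varphi_{is}$ on underlying vector spaces, and componentwise injectivity is immediate. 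This sidesteps both Lemma~\ref{GPE} and the Tor argument entirely, at the modest cost of checking that each leading principal submatrix algebra $\Gamma_j$ is again Gorenstein (which follows from the projective-dimension criterion you cite, and which the paper itself implicitly uses for the trailing submatrices in the induction of Lemma~\ref{GPE}) so that Lemma~\ref{C} applies to the column bimodules. Your bookkeeping is sound; in fact Lemma~\ref{C} applied to $\Gamma$ itself already gives compatibility of every $M_{j-1}^*$, so even that detour is optional.
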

\begin{proof}
	We write $X=\lbr X'\\X''\rbr$, where $X''=\lbr X_{i+1}\\ \vdots \\
	X_s\\ \vdots \\0\rbr$ for each $1\leq i<s\leq n$.
    We claim that each $R_i$-map $f_{is} : M_{is}\otimes_{R_s} X_s \rightarrow M_i^{**}\otimes_{\Gamma'_{n-i}} X''$ sending $m_{is}\otimes x_s$ to $\lbr 0, \cdots, m_{is},\cdots,0\rbr\otimes \lbr 0,\cdots,
	x_s, \cdots, 0\rbr^t$ is injective, where $(-)^t$ is the transpose. Since $\varphi_{is}=\varphi_i^{**}\circ f_{is}$ for $1\leq i<s\leq n$, then we are done by Lemma~\ref{GPE}. 
	
	For the claim, we observe that for each $1\leq i<s\leq n$, the $R_i$-map $f_{is}$ is a composition of the following
	\[M_{is}\otimes_{R_s} X_s \overset{g_{is}}{\longrightarrow} \lbr 0,
	\cdots,0,M_{is},\cdots, M_{in} \rbr\otimes_{\Gamma'_{n-i}} X'' \overset{\iota\otimes {\rm Id}}{\longrightarrow} M_i^{**}\otimes_{\Gamma'_{n-i}} X'',\] where the right $\Gamma'_{n-i}$-map $\lbr 0,
	\cdots,M_{is},\cdots, M_{in} \rbr \overset{\iota}{\longrightarrow} M_i^{**}$ is the inclusion map and $g_{is}$ sends $m_{is}\otimes x_s$ to $\lbr 0,
	\cdots, m_{is}, \cdots,0\rbr\otimes \lbr 0,\cdots,
	x_s, \cdots, 0\rbr^t$. We observe a $R_i$-map $\lbr 0,
	\cdots,M_{is},\cdots, M_{in} \rbr\otimes_{\Gamma'_{n-i}} X'' \overset{g'_{is}}{\longrightarrow} M_{is}\otimes_{R_s} X_s$, $\lbr 0,
	\cdots,m_{is},\cdots, m_{in} \rbr\otimes\lbr 0,\cdots,
	x_s, \cdots, 0\rbr^t \mapsto m_{is}\otimes x_s$ satisfying $g'_{is}\circ g_{is}={\Id_{M_{is}\otimes_{R_s} X_s}}$. Hence the $R_i$-map $g_{is}$ is injective.
	We observe that the right $\Gamma'_{n-i}$-modules $\lbr 0,
	\cdots,M_{is},\cdots, M_{in} \rbr$ and $M_i^{**}$ have finite projective dimensions; see~\cite[Lemma 3.1]{WR}, and $X''\in \Gamma'_{n-i}$-{\rm Gproj} by Lemma~\ref{ZGP}. Then the $R_i$-map $\iota\otimes {\rm Id}$ is injective by Lemma~\ref{XL} (2).
\end{proof}

\section{Proof of Proposition 1.1}

Let $k$ be a field. Let $\C$ be a finite category, that is, it has
only finitely many morphisms, and consequently it has only finitely
many objects. Denote by $\Mor\C$ the finite set of all morphisms in
$\C$. The \emph{category algebra} \emph{k}$\C$ of $\C$ is defined as
follows: $\emph{k}\C=\bigoplus\limits_{\alpha \in \Mor\C}k\alpha$ as
a $k$-vector space and the product $*$ is given by the rule
 \[\alpha * \beta=\left\{\begin{array}{ll}
   \alpha\circ\beta, & \text{ if }\text{$\alpha$ and $\beta$ can be composed in $\C$}; \\
    0, & \text{otherwise.}
    \end{array}\right.\]
The unit is given by $1_{k\C}=\sum\limits_{x \in \Obj\C }\Id_x$,
where $\Id_x$ is the identity endomorphism of an object $x$ in $\C$.

If $\C$ and $\mathscr D$ are two equivalent finite categories, then
\emph{k}$\C$ and \emph{k}$\mathscr D$ are Morita equivalent; see
\cite[Proposition 2.2]{PWebb2}. In particular, $k\C$ is Morita
equivalent to $k\C_0$, where $\C_0$ is any skeleton of $\C$. So we
may assume that $\C$ is \emph{skeletal}, that is, for any two
distinct objects $x$ and $y$ in $\C$, $x$ is not isomorphic to $y$.

The category $\C$ is called a \emph{finite EI category} provided
that all endomorphisms in $\C$ are isomorphisms. In particular,
$\Homc(x,x)=\Ac(x)$ is a finite group for any object $x$ in $\C$. Denote by $k\Ac(x)$ the group algebra.

Throughout this paper, we assume that $\C$ is a finite EI category which
is skeletal, and $\Obj\C=\{x_1,x_2,\cdots,x_n\}$, $n\geq 2$, satisfying
$\Homc(x_i,x_j)=\emptyset$ if $i<j$.

Let $M_{ij}=k\Homc(x_j,x_i)$.
Write $R_i=M_{ii}$, which is a group algebra. Recall that the category algebra $k\C$ is isomorphic to the corresponding
upper triangular matrix algebra $\Gamma_{\C}=\left(
          \begin{array}{cccc}
            R_1 & M_{12} & \cdots & M_{1n} \\
             & R_2 & \cdots & M_{2n} \\
             &  & \ddots & \vdots \\
                &  &  & R_n \\
          \end{array}
        \right)$; see~\cite[Section 4]{WR}.

Let us fix a field $k$. Denote by $k$-mod the category of finite
dimensional $k$-vector spaces and $(k\text{-mod})^{\C}$ the category of covariant functors from $\C$ to $k$-mod. 

Recall that the category $k\C$-mod of left modules over the category algebra $k\C$, is identified with $(k\text{-mod})^{\C}$; see \cite[Proposition 2.1]{PWebb2}. The category $k\C$-mod is a symmetric monoidal category. More precisely, the tensor product -$\hat{\otimes}$- is defined by \[(M\hat{\otimes} N)(x)=M(x)\otimes_k N(x)\]
for any $M, N\in (k\text{-mod})^{\C}$ and $x\in {\rm Obj}\C$, and $\alpha.(m\otimes n)=\alpha.m\otimes \alpha.n$ for any $\alpha\in \Mor\C, m\in M(x), n\in N(x)$; see~\cite{XF2, XF3}. 

Let $\C$ be a finite EI category, and $\Gamma=\Gamma_{\C}=\left(
\begin{array}{cccc}
R_1 & M_{12} & \cdots & M_{1n} \\
& R_2 & \cdots & M_{2n} \\
&  & \ddots & \vdots \\
&  &  & R_n \\
\end{array}
\right)$ be the corresponding
upper triangular matrix algebra. Let $X=\lbr X_1\\ \vdots \\ X_n \\ \rbr$ and $Y=\lbr Y_1\\ \vdots \\ Y_n \\ \rbr$ be two $\Gamma$-modules, where the left $\Gamma$-module structure maps are denoted by $\varphi^X_{ij}$ and $\varphi^Y_{ij}$, respectively. We observe that $X\hat{\otimes} Y=\lbr X_1\otimes_k Y_1\\ \vdots \\ X_n\otimes_k Y_n \\ \rbr$, where the module structure map $\varphi_{ij}: M_{ij}\otimes_{R_j}(X_j\otimes_k Y_j)\rightarrow  X_i\otimes_k Y_i$ is induced by the following: $\varphi_{ij}(\alpha_{ij}\otimes (a_j\otimes b_j))=\varphi^X_{ij}(\alpha_{ij}\otimes a_j)\otimes \varphi^Y_{ij}(\alpha_{ij}\otimes b_j)$, where $\alpha_{ij}\in {\rm Hom}_{\C}(x_j,x_i)$, $a_j\in X_j$ and $b_j\in Y_j$.

\begin{defi}
	\emph{Let $\C$ be a finite EI category, and $\Gamma$ be the corresponding upper triangular matrix algebra.
		We say that $\C$ is} GPT-closed, \emph{if $X,Y\in \Gamma$-{\rm Gproj} implies  $X\hat{\otimes} Y\in \Gamma$-{\rm Gproj}.}
\end{defi}

Recall that
a finite EI category $\C$ is \emph{projective over $k$} if each
$k{\rm Aut}_{\C}(y)$-$k{\rm Aut}_{\C}(x)$-bimodule $k{\rm Hom}_{\C}(x,y)$ is projective on both sides; see~\cite[Definition 4.2]{WR}. We recall the fact that the category algebra $k\C$ is Gorenstein if and only if  $\C$ is projective over $k$; see ~\cite[Proposition 5.1]{WR}.

Let $\C$ be a finite projective EI category, and $\Gamma$ be the corresponding upper triangular matrix algebra. Denote by $C_i$ the $i$-th column of $\Gamma$ which is a $\Gamma$-$R_i$-bimodule and projective on both sides.

\begin{prop}\label{ED}
Let $\C$ be a finite projective EI category, and $\Gamma$ be the corresponding upper triangular matrix algebra. Then the following statements are equivalent.
\begin{enumerate}
	\item  The category $\C$ is GPT-closed.
	\item  For any $1\leq p\leq q\leq n$, $C_p\hat{\otimes} C_q\in \Gamma$-{\rm Gproj}. 
	\item  For any $1\leq p\leq q\leq n$, $C_p\hat{\otimes} C_q\in \Gamma$-{\rm proj}. 
\end{enumerate}
\end{prop}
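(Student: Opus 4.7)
I would prove $(1)\Leftrightarrow(2)$ and $(2)\Leftrightarrow(3)$ separately. The implications $(1)\Rightarrow(2)$ and $(3)\Rightarrow(2)$ are immediate: each column $C_i$ is a direct summand of $\Gamma$ as a left $\Gamma$-module, hence projective and hence Gorenstein-projective, so under either hypothesis $C_p\hat{\otimes} C_q$ lies in the required class.

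For $(2)\Rightarrow(1)$, I would take $X,Y\in\Gamma$-Gproj and choose right coresolutions $0\to X\to P^0\to P^1\to\cdots$ and $0\to Y\to Q^0\to Q^1\to\cdots$ by projectives whose cosyzygies remain Gorenstein-projective (available since $\Gamma$ is Gorenstein). Since $\hat{\otimes}$ is computed pointwise over the field $k$, it is exact in each variable; a K\"unneth-style argument on the bicomplex $P^\bullet\hat{\otimes} Q^\bullet$ produces an exact sequence $0\to X\hat{\otimes} Y\to T^0\to T^1\to\cdots$ with $T^n=\bigoplus_{i+j=n}P^i\hat{\otimes} Q^j$. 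Distributivity of $\hat{\otimes}$ over direct sums expresses each $T^n$ as a direct sum of terms $C_a\hat{\otimes} C_b$, which lie in $\Gamma$-Gproj by (2); hence each $T^n$ is Gorenstein-projective. Truncating at $m=$ the Gorenstein dimension of $\Gamma$ and applying Lemma~\ref{XL}(3) yields $X\hat{\otimes} Y\in\Gamma$-Gproj.

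For $(2)\Rightarrow(3)$, which I expect to be the hard step, note that since $\Gamma$ is Gorenstein, a Gorenstein-projective module is projective iff its projective dimension is finite; hence it suffices to bound $\pd_\Gamma(C_p\hat{\otimes} C_q)$. The key observation is that each row $(C_p\hat{\otimes} C_q)_i=M_{ip}\otimes_k M_{iq}$, equipped with its diagonal $R_i$-action, is $R_i$-projective: the standard group-algebra twist $r\otimes v\mapsto r\otimes r^{-1}v$, available because $R_i=k\Ac(x_i)$ is a group algebra, converts the diagonal action into the regular action on the first tensorand, and the left $R_i$-projectivity of $M_{ip}$ (the defining condition of a projective EI category) then propagates through the tensor product. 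A short filtration argument on the columns of $\Gamma$, combined with \cite[Lemma 3.1]{WR} controlling projective dimensions through the triangular structure, then shows that any $\Gamma$-module whose every row is $R_i$-projective has finite $\Gamma$-projective dimension, completing (3). The main obstacle is exactly this passage from pointwise $R_i$-projectivity to finiteness of $\pd_\Gamma$; the rest is essentially formal once the group-algebra twist and the K\"unneth resolution are in hand.
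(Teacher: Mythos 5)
Your proposal is correct and follows essentially the same route as the paper: the two trivial implications, the reduction of (2)$\Rightarrow$(3) to finiteness of projective dimension via the row-wise $R_i$-projectivity of $M_{ip}\otimes_k M_{iq}$ over the group algebras $R_i$ (the paper simply cites \cite[Corollary 3.6]{WR} for the passage you sketch by filtration), and a finite projective coresolution plus Lemma~\ref{XL}(3) for (2)$\Rightarrow$(1). The only cosmetic difference is in (2)$\Rightarrow$(1), where you coresolve both factors and take a K\"unneth-type total complex, whereas the paper coresolves only one factor, uses exactness of $\hat{\otimes}$ in each variable, and bootstraps from the case of $N$ projective to $N$ Gorenstein-projective.
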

\begin{proof}
``(1)$\Rightarrow$ (2)" and ``(3)$\Rightarrow$ (2)" are obvious.

``(2)$\Rightarrow$ (3)" We only need to prove that the $\Gamma$-module $C_p\hat{\otimes} C_q$ has finite projective dimension, since a Gorenstein-projective module with finite projective dimension is projective. We have $C_p\hat{\otimes} C_q=\lbr M_{1p}\otimes_k M_{1q}\\ \vdots \\ M_{p-1,p}\otimes_k M_{p-1,q}\\R_p\otimes_k M_{pq} \\ 0\\ \vdots\\0\rbr$. Since $\C$ is projective, we have that each $M_{ip}$ is a projective $R_i$-module for $1\leq i\leq p$. Then each $M_{ip}\otimes_k M_{iq}$ is a projective $R_i$-module since $R_i$ is a group algebra for $1\leq i\leq p$. Hence the $\Gamma$-module $C_p\hat{\otimes} C_q$ has finite projective dimension by~\cite[Corollary 3.6]{WR}. Then we are done.

``(2)$\Rightarrow$ (1)" We have that $\Gamma$ is a Gorenstein algebra by~\cite[Proposition 5.1]{WR}. Then there is $d\geq 0$ such that $\Gamma$ is a $d$-Gorenstein algebra. 

For any $M\in \Gamma$-{\rm Gproj}, consider the following exact sequence
\[0\rightarrow M\rightarrow P_0\rightarrow P_1\rightarrow \cdots \rightarrow P_d\rightarrow Y\rightarrow 0\]
with $P_i$ projective, $0\leq i\leq d$. Applying -$\hat{\otimes} N$ on the above exact sequence, we have an exact sequence
\begin{align}\label{ES}
	0\rightarrow M\hat{\otimes} N\rightarrow P_0\hat{\otimes} N\rightarrow P_1\hat{\otimes} N\rightarrow \cdots \rightarrow P_d\hat{\otimes} N\rightarrow Y\hat{\otimes} N\rightarrow 0,
\end{align}
 since the tensor product -$\hat{\otimes}$- is exact in both variables. If $N$ is projective, we have that each $P_i\hat{\otimes} N$ is Gorenstein-projective for $0\leq i\leq d$ by (2). Then we have $M\hat{\otimes} N\in \Gamma$-{\rm Gproj} by Lemma \ref{XL} (3). If $N$ is Gorenstein-projective, we have that each $P_i\hat{\otimes} N$ is Gorenstein-projective for $0\leq i\leq d$ in exact sequence (\ref{ES}) by the above process. Then we have $M\hat{\otimes} N\in \Gamma$-{\rm Gproj} by Lemma \ref{XL} (3). Then we are done.
\end{proof}

The argument in ``(2)$\Rightarrow$ (3)" of Proposition~\ref{ED} implies the following result. It follows that the tensor product -$\hat{\otimes}$- on $\Gamma$-{\rm Gproj} \emph{induces} the one on $\Gamma$-$\underline{\rm Gproj}$, still denoted by -$\hat{\otimes}$-.
\begin{lem}\label{ED1}
 Assume that $\C$ is GPT-closed. Let $M\in \Gamma$-{\rm Gproj} and $P\in \Gamma$-{\rm proj}. Then $M\hat{\otimes} P\in \Gamma$-{\rm proj}. \hfill $\square$
\end{lem}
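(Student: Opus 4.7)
The plan is to mimic the argument used in ``(2)$\Rightarrow$(3)" of Proposition~\ref{ED}, exploiting the standard observation that a Gorenstein-projective module of finite projective dimension is automatically projective. My strategy is therefore to prove that $M\hat{\otimes}P$ lies in $\Gamma$-Gproj and, independently, has finite projective dimension over $\Gamma$; projectivity then follows.

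The first property is immediate: since every projective module is Gorenstein-projective, $P\in\Gamma$-Gproj, and the GPT-closedness hypothesis on $\C$ gives $M\hat{\otimes}P\in\Gamma$-Gproj at once. For the second property, I would first use the additivity of $\hat{\otimes}$ to reduce to the case $P=C_q$ for some $1\leq q\leq n$. Writing $M=\lbr M_1\\ \vdots \\ M_n\rbr$, the tensor product $M\hat{\otimes}C_q$ then has diagonal entries $M_i\otimes_k M_{iq}$ for $1\leq i\leq q$ and zero for $i>q$. Since $\C$ is projective over $k$, each $M_{iq}$ is a projective $R_i$-module, and because $R_i$ is a group algebra, tensoring (over $k$, with diagonal action) any $R_i$-module with a projective $R_i$-module preserves projectivity. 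Hence each diagonal entry $M_i\otimes_k M_{iq}$ is projective over $R_i$, and~\cite[Corollary 3.6]{WR} then yields $\pd_{\Gamma}(M\hat{\otimes}C_q)<\infty$. Combined with the Gorenstein-projectivity established above, this forces $M\hat{\otimes}P$ to be projective.

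No substantial obstacle is anticipated: the computation is essentially identical to the one appearing in the proof of Proposition~\ref{ED}, with the column projective $C_p$ replaced by an arbitrary Gorenstein-projective $M$. The only genuinely new input is the GPT-closedness hypothesis, and it is used exactly once, to promote $M\hat{\otimes}P$ from a module whose entries are projective over the diagonal group algebras to a module that is Gorenstein-projective over $\Gamma$.
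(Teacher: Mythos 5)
Your proposal is correct and is exactly the argument the paper intends: the paper proves this lemma by remarking that the computation in ``(2)$\Rightarrow$(3)'' of Proposition~\ref{ED} carries over, i.e.\ $M\hat{\otimes}P$ is Gorenstein-projective by GPT-closedness and has finite projective dimension because its diagonal entries $M_i\otimes_k M_{iq}$ are projective over the group algebras $R_i$, whence it is projective. Your write-up just makes this explicit.
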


Let $\C$ be a finite projective EI category, and $\Gamma$ be the corresponding upper triangular matrix algebra of $\C$. Recall that a complex in ${\rm D}^b(\Gamma\text{-mod})$, the bounded derived category of finitely generated left $\Gamma$-modules, is called a \emph{perfect complex} if it is isomorphic to a bounded complex of finitely generated projective modules. Recall from \cite{Buch} that the \emph{singularity category} of $\Gamma$, denoted by ${\rm D}_{sg}(\Gamma)$, is the Verdier quotient category ${\rm D}^b(\Gamma\text{-mod})/{\rm perf}(\Gamma)$, where ${\rm perf}(\Gamma)$ is a thick subcategory of ${\rm D}^b(\Gamma\text{-mod})$ consisting of all perfect complexes. 

Recall from \cite{WR2} that there is a triangle equivalence
\begin{align}\label{ETTF}
F: \Gamma\text{-}\underline{\rm Gproj}\overset{\sim}{\longrightarrow} {\rm D}_{sg}(\Gamma)
\end{align}
sending a Gorenstein-projective module to the corresponding stalk complex concentrated on degree zero. The functor $F$ \emph{transports} the tensor product on ${\rm D}_{sg}(\Gamma)$ to $\Gamma\text{-}\underline{\rm Gproj}$ such that the category $\Gamma\text{-}\underline{\rm Gproj}$ becomes a tensor triangulated category. 

\begin{prop}\label{T}
Let $\C$ be a finite projective EI category, and $\Gamma$ be the corresponding upper triangular matrix algebra of $\C$. Assume that the category $\C$ is GPT-closed. Then the tensor product -$\hat{\otimes}$- on $\Gamma$-$\underline{\rm Gproj}$ induced by the tensor product on $\Gamma$-{\rm Gproj} coincide with the one transported from ${\rm D}_{sg}(\Gamma)$, up to natural isomorphism.
\end{prop}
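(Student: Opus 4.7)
My plan is to promote the triangle equivalence $F$ of \cite{WR2} to a tensor equivalence by showing that the natural tensor product on ${\rm D}_{sg}(\Gamma)$ coincides with the descent of $-\hat{\otimes}-$ from $\Gamma$-mod, and that $F$ then identifies this descent with the induced tensor product on $\Gamma\text{-}\underline{\rm Gproj}$ supplied by Lemma~\ref{ED1}.

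The starting point is the exactness of $-\hat{\otimes}-$ in both variables: at each $x \in {\rm Obj}\C$ the functor is $\otimes_k$ over the field $k$. Hence the termwise tensor product of complexes preserves quasi-isomorphisms and defines a bifunctor on ${\rm D}^b(\Gamma\text{-mod})$ which agrees with the derived tensor product, and for stalk complexes $M[0],N[0]$ of modules one has $M[0]\otimes^{\mathbf L} N[0]\cong (M\hat{\otimes} N)[0]$. I would next verify that this bifunctor descends to ${\rm D}_{sg}(\Gamma)$, that is, that ${\rm perf}(\Gamma)$ is a tensor ideal under $\hat{\otimes}$. Since $\Gamma$ is $d$-Gorenstein, every $X\in {\rm D}^b(\Gamma\text{-mod})$ lies in the thick subcategory generated by $\Gamma\text{-Gproj}\cup\Gamma\text{-proj}$: truncating a projective resolution past the $d$-th syzygy gives a bounded complex of projectives ending in a Gorenstein-projective module quasi-isomorphic to $X$. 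Thus it suffices to check that $P^\bullet \hat{\otimes} G$ is perfect for a perfect complex $P^\bullet$ and a Gorenstein-projective $G$, and this is immediate from Lemma~\ref{ED1} applied termwise.

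The comparison itself is then direct. For $M,N \in \Gamma\text{-Gproj}$, GPT-closedness gives $M\hat{\otimes} N \in \Gamma\text{-Gproj}$, so $F(M\hat{\otimes} N)$ is the stalk complex $(M\hat{\otimes} N)[0]$. On the other hand, $F(M)\otimes_{{\rm D}_{sg}(\Gamma)} F(N) = M[0]\hat{\otimes} N[0] = (M\hat{\otimes} N)[0]$ by the exactness observation above. Hence the transported tensor product $F^{-1}(F(M)\otimes F(N))$ equals $M\hat{\otimes} N$, which is exactly the induced tensor product, and naturality in $M$ and $N$ is automatic because each identification is the identity on the underlying stalk complex. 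I expect the main technical obstacle to be the descent step: once the tensor ideal property for ${\rm perf}(\Gamma)$ under $\hat{\otimes}$ is secured, the remainder is essentially a tautology about stalk complexes.
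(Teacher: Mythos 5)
Your argument is correct and follows essentially the same route as the paper: both proofs rest on the fact that the tensor product on ${\rm D}_{sg}(\Gamma)$ is the descent of $-\hat{\otimes}-$ from ${\rm D}^b(\Gamma\text{-mod})$, so that on stalk complexes $F(M)\hat{\otimes}F(N)=F(M\hat{\otimes}N)$ and the two structures on $\Gamma$-$\underline{\rm Gproj}$ agree. The only difference is one of detail: the paper simply recalls the descent from \cite{WR2,XF2}, whereas you verify that ${\rm perf}(\Gamma)$ is a $\hat{\otimes}$-ideal directly, via the Gorenstein truncation and a termwise application of Lemma~\ref{ED1}; that verification is sound.
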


\begin{proof}
Consider the functor $F$ in (\ref{ETTF}). Recall that the tensor product on ${\rm D}_{sg}(\Gamma)$ is induced by the tensor product -$\hat{\otimes}$- on ${\rm D}^b(\Gamma\text{-mod})$, where the later is given by -$\hat{\otimes}$- on $\Gamma$-mod. We have $F(M)\hat{\otimes} F(N)=F(M\hat{\otimes} N)$ in ${\rm D}_{sg}(\Gamma)$ for any $M,N\in \Gamma$-$\underline{\rm Gproj}$. This implies that $F$ is a tensor triangle equivalence. Then we are done.
\end{proof}
Let $k$ be a field and $G$ be a finite group. Recall that a left (resp. right) $G$-set is a set with a left (resp. right) $G$-action. Let $Y$ be a left $G$-set and $X$ be a right $G$-set.  Recall an equivalence relation ``$\sim$" on the product $X\times Y$ as follows: $(x,y)\sim(x',y')$ if and only if there is an element $g\in G$ such that $x=x'g$ and $y=g^{-1}y'$ for $x,x'\in X$ and $y,y'\in Y$. Write the quotient set $X\times Y/\sim$ as $X\times_G Y$.

The following two lemmas are well known.
\begin{lem}\label{G-Prod}
	Let $Y$ be a left $G$-set and $X$ be a right $G$-set.
	Then there is an isomorphism of $k$-vector spaces
	\[\varphi: kX\otimes_{kG} kY \overset{\sim}{\longrightarrow} k(X\times_G Y),  \quad  x\otimes y\mapsto(x,y),\]
	where $x\in X$ and $y\in Y$. \hfill $\square$
\end{lem}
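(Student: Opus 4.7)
The plan is to prove the lemma by constructing a two-sided inverse to $\varphi$. Since both the source $kX\otimes_{kG}kY$ and the target $k(X\times_G Y)$ are $k$-vector spaces with natural spanning sets (pure tensors $x\otimes y$ on one side, equivalence classes $(x,y)$ on the other), the whole argument reduces to checking well-definedness on each side and then verifying that the two maps compose to the identity on those generators.

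First I would verify that $\varphi$ itself is well-defined. The assignment $(x,y)\mapsto(x,y)$ gives a $k$-bilinear map $kX\times kY\to k(X\times_G Y)$, and to factor it through $kX\otimes_{kG}kY$ one needs the $kG$-balancing identity $(xg)\otimes y = x\otimes(gy)$ to be respected, i.e., $(xg,y) = (x,gy)$ in $X\times_G Y$. This holds by the defining equivalence using the element $g$ itself, since $xg = x\cdot g$ and $y = g^{-1}(gy)$.

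Next I would construct the candidate inverse $\psi:k(X\times_G Y)\to kX\otimes_{kG}kY$ by $k$-linearly extending $(x,y)\mapsto x\otimes y$. For well-definedness, suppose $(x,y)\sim(x',y')$, so there exists $g\in G$ with $x = x'g$ and $y = g^{-1}y'$. Then, using the balancing relation in the tensor product over $kG$, one computes $x\otimes y = x'g\otimes g^{-1}y' = x'\otimes g(g^{-1}y') = x'\otimes y'$, as required. Checking $\varphi\circ\psi = \mathrm{id}$ and $\psi\circ\varphi = \mathrm{id}$ is then immediate on the spanning sets, which finishes the argument.

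The only potential pitfall, which I expect to be the sole (and rather minor) obstacle, is keeping the sides of the $G$-actions straight so that the balancing identity in $kX\otimes_{kG}kY$ lines up correctly with the equivalence relation defining $X\times_G Y$; once the variances are tracked, every step is formal.
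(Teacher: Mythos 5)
Your proof is correct and is the standard argument; the paper states this lemma as well known and omits the proof entirely, so there is nothing to diverge from. Both directions of well-definedness (the balancing relation $(xg,y)=(x,gy)$ for $\varphi$, and independence of the representative of an equivalence class for $\psi$) are handled properly, and the identity checks on spanning sets complete the argument.
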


\begin{lem}\label{Prod}
	Let $Y_1$ and $Y_2$ be two left $G$-sets. Then we have an isomorphism of left $kG$-modules 
	\[\varphi: kY_1\otimes_k kY_2 \overset{\sim}{\longrightarrow} k(Y_1\times Y_2), \quad y_1\otimes y_2\mapsto (y_1,y_2),\] 
	where $y_1\in Y_1, y_2\in Y_2$. \hfill $\square$
\end{lem}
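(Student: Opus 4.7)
The plan is to exhibit $\varphi$ as a $k$-linear isomorphism built from bijections of bases, and then check $kG$-equivariance on generators. Since $Y_1$ is a $k$-basis of $kY_1$ and $Y_2$ is a $k$-basis of $kY_2$, the set of pure tensors $\{y_1\otimes y_2\mid y_1\in Y_1,\, y_2\in Y_2\}$ is a $k$-basis of $kY_1\otimes_k kY_2$. On the other hand, $Y_1\times Y_2$ is by definition a $k$-basis of $k(Y_1\times Y_2)$. The assignment $y_1\otimes y_2\mapsto (y_1,y_2)$ is the bijection of bases induced by the identity map of sets, so extending $k$-linearly gives a well-defined $k$-linear isomorphism $\varphi$; concretely, one first defines a $k$-bilinear map $kY_1\times kY_2\to k(Y_1\times Y_2)$ on basis pairs and invokes the universal property of the tensor product, then writes down the evident inverse sending $(y_1,y_2)\mapsto y_1\otimes y_2$.

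It remains to check that $\varphi$ is $kG$-linear with respect to the diagonal actions. Recall that on the left-hand side the $kG$-module structure on $kY_1\otimes_k kY_2$ is the diagonal one, $g\cdot(y_1\otimes y_2)=gy_1\otimes gy_2$, while on the right-hand side $Y_1\times Y_2$ is a left $G$-set via $g\cdot(y_1,y_2)=(gy_1,gy_2)$, inducing the corresponding left $kG$-module structure on $k(Y_1\times Y_2)$. On basis elements one then computes
\[
\varphi\bigl(g\cdot(y_1\otimes y_2)\bigr)=\varphi(gy_1\otimes gy_2)=(gy_1,gy_2)=g\cdot(y_1,y_2)=g\cdot\varphi(y_1\otimes y_2),
\]
and by $k$-linearity and $k$-bilinearity of the $G$-action this extends to all of $kY_1\otimes_k kY_2$, so $\varphi$ is a morphism of left $kG$-modules. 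Combined with the bijectivity established above, $\varphi$ is an isomorphism of left $kG$-modules.

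There is essentially no obstacle here: the statement is a standard bookkeeping fact about permutation modules, used only to identify tensor products of permutation modules with permutation modules of product $G$-sets. The only point that needs a word of care is to specify which $G$-action is meant on each side (diagonal on both), after which the computation on basis elements is immediate.
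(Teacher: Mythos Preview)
Your proof is correct. The paper itself gives no proof of this lemma (it is marked with a terminal $\square$ and declared well known), so your argument---identifying bases, extending $k$-linearly, and checking $G$-equivariance on generators for the diagonal action---is exactly the standard verification the paper omits.
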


\begin{lem}\label{NEC1}
	Let $\C$ be a finite projective EI category, and $\Gamma$ be the corresponding upper triangular matrix algebra. Assume $1\leq p\leq q\leq n$. Then $C_p\hat{\otimes} C_q\in \Gamma$-{\rm proj} implies that each morphism in $\bigsqcup_{y\in {\rm Obj}\C}\Homc(x_p,y)$ is a monomorphism.
\end{lem}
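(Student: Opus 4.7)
The plan is to apply Corollary~\ref{GPN} to the module $C_p\hat{\otimes} C_q$ and then translate the resulting injectivity statement into a monomorphism condition on morphisms out of $x_p$. First I would note that projective modules are Gorenstein-projective, so the hypothesis yields $C_p\hat{\otimes} C_q\in \Gamma$-{\rm Gproj}. From the column description already used in the proof of Proposition~\ref{ED},
\[
C_p\hat{\otimes} C_q \;=\; \lbr M_{1p}\otimes_k M_{1q}\\ \vdots \\ M_{p-1,p}\otimes_k M_{p-1,q}\\ R_p\otimes_k M_{pq}\\ 0\\ \vdots\\ 0\rbr,
\]
and, since $p\le q$, its last nonzero entry sits in row $p$; this places me exactly in the situation of Corollary~\ref{GPN} with $s=p$.

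Next I would invoke that corollary to obtain, for each $1\le i<p$, an injective $R_i$-map
\[
\varphi_{ip}\colon M_{ip}\otimes_{R_p}\bigl(R_p\otimes_k M_{pq}\bigr)\longrightarrow M_{ip}\otimes_k M_{iq}.
\]
Using the canonical isomorphism $M_{ip}\otimes_{R_p}R_p\cong M_{ip}$ and unpacking the formula defining the module structure on $C_p\hat{\otimes} C_q$, this rewrites (on pure tensors of basis morphisms, then extended $k$-linearly) as the $k$-linear map
\[
\psi_{ip}\colon M_{ip}\otimes_k M_{pq}\longrightarrow M_{ip}\otimes_k M_{iq}, \qquad \alpha\otimes\beta\longmapsto \alpha\otimes(\alpha\circ\beta).
\]

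To conclude I would argue combinatorially. Since $M_{ab}=k\Homc(x_b,x_a)$ carries the distinguished $k$-basis $\Homc(x_b,x_a)$, the source and target of $\psi_{ip}$ are free $k$-vector spaces on $\Homc(x_p,x_i)\times\Homc(x_q,x_p)$ and $\Homc(x_p,x_i)\times\Homc(x_q,x_i)$ respectively, and $\psi_{ip}$ sends each basis element to a basis element. Hence $k$-linear injectivity of $\psi_{ip}$ is equivalent to injectivity of the set map $(\alpha,\beta)\mapsto(\alpha,\alpha\circ\beta)$; matching first coordinates, this reduces to the assertion that for every $\alpha\in\Homc(x_p,x_i)$ the post-composition map $\beta\mapsto\alpha\circ\beta$ from $\Homc(x_q,x_p)$ to $\Homc(x_q,x_i)$ is injective, that is, $\alpha$ is a monomorphism (tested against $x_q$). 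The remaining cases for the target $y$ cause no trouble: for $y=x_p$ the element $\alpha$ lies in $\Ac(x_p)$ and is an isomorphism, while for $y=x_j$ with $j>p$ the set $\Homc(x_p,y)$ is empty. I do not anticipate a genuine obstacle here, since Corollary~\ref{GPN} has absorbed the homological content and what remains is a short combinatorial rewriting of the relevant structure map.
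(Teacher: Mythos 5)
Your proposal is correct and follows essentially the same route as the paper: reduce to Corollary~\ref{GPN} applied to the column description of $C_p\hat{\otimes} C_q$, then read off injectivity of $\varphi_{ip}$ on the canonical bases to deduce that post-composition with $\alpha$ is injective. The only cosmetic difference is that you cancel the $R_p$-tensor via the untwisting isomorphism $M_{ip}\otimes_{R_p}(R_p\otimes_k M_{pq})\cong M_{ip}\otimes_k M_{pq}$ (valid, since the diagonal $R_p$-action on $R_p\otimes_k M_{pq}$ is free), whereas the paper keeps the quotient-set basis $\Homc(x_p,x_i)\times_{\Ac(x_p)}(\Ac(x_p)\times\Homc(x_q,x_p))$ from Lemma~\ref{G-Prod} and specializes to elements of the form $(\alpha,(\Id_{x_p},\beta))$.
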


\begin{proof}
	We have $C_p\hat{\otimes} C_q=\lbr M_{1p}\otimes_k M_{1q}\\ \vdots \\ M_{p-1,p}\otimes_k M_{p-1,q}\\R_p\otimes_k M_{pq} \\ 0\\ \vdots\\0\rbr$. Then each $R_i$-map \[\varphi_{ip}: M_{ip}\otimes_{R_p} (R_p\otimes_k M_{pq}) \rightarrow M_{ip}\otimes_k M_{iq}\] sending $\alpha\otimes (g\otimes \beta)$ to $\alpha\circ g\otimes \alpha\circ\beta$, where $\alpha\in \Homc(x_p,x_i), g\in\Ac(x_p),\beta\in \Homc(x_q,x_p)$, is injective for $1\leq i<p\leq q\leq n$ by Corollary~\ref{GPN}. We have that the sets $\Homc(x_p,x_i)\times_{\Ac(x_p)}( \Ac(x_p)\times \Homc(x_q,x_p))$ and $\Homc(x_p,x_i)\times \Homc(x_q,x_i)$ are $k$-basis of $ M_{ip}\otimes_{R_p} (R_p\otimes_k M_{pq})$ and $M_{ip}\otimes_k M_{iq}$, respectively by Lemma~\ref{G-Prod} and Lemma~\ref{Prod}. For each $1\leq i<p$, since $\varphi_{ip}$ is injective, we have an injective map \[\varphi: \Homc(x_p,x_i)\times_{\Ac(x_p)}( \Ac(x_p)\times \Homc(x_q,x_p))\rightarrow \Homc(x_p,x_i)\times \Homc(x_q,x_i)\] sending $(\alpha,(g,\beta))$ to $(\alpha\circ g,\alpha\circ\beta)$, for $\alpha\in \Homc(x_p,x_i), g\in\Ac(x_p),\beta\in \Homc(x_q,x_p)$.
	
	For each $1\leq i<p$, and $\alpha\in \Homc(x_p,x_i)$, let $\beta,\beta'\in \Homc(x_q,x_p)$ satisfy $\alpha\circ\beta=\alpha\circ\beta'$. Then we have $(\alpha,\alpha\circ\beta)=(\alpha,\alpha\circ\beta')$, that is, $\varphi(\alpha,({\rm Id}_{x_p},\beta))=\varphi(\alpha,({\rm Id}_{x_p},\beta'))$. Since $\varphi$ is injective, we have $(\alpha,({\rm Id}_{x_p},\beta))=(\alpha,({\rm Id}_{x_p},\beta'))$ in $\Homc(x_p,x_i)\times_{\Ac(x_p)}( \Ac(x_p)\times \Homc(x_q,x_p))$. Hence $\beta=\beta'$. Then we have that $\alpha$ is a monomorphism.
\end{proof}

\begin{prop}\label{NECE1}
	Let $\C$ be a finite projective EI category. Assume that $\C$ is GPT-closed. Then that each morphism in $\C$ is a monomorphism.
\end{prop}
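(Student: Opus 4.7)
The proposition is essentially a direct assembly of two already-proven results: Proposition~\ref{ED} and Lemma~\ref{NEC1}. The plan is to chain them together.

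The first step is to translate the GPT-closed hypothesis into projectivity of the bimodule-like tensor products $C_p \hat{\otimes} C_q$. By Proposition~\ref{ED}, the implication (1)$\Rightarrow$(3) gives that $C_p \hat{\otimes} C_q \in \Gamma$-{\rm proj} for every pair $1 \leq p \leq q \leq n$. This is the only place the full GPT-closed assumption is used; everything after this step is purely category-theoretic.

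The second step is to fix an arbitrary index $p$ and apply Lemma~\ref{NEC1} for each $q$ with $p \leq q \leq n$. The lemma's conclusion gives that every morphism with source $x_p$ is a monomorphism against test morphisms from $x_q$. Since our ordering convention forces $\Hom_{\C}(x_q, x_p) = \emptyset$ whenever $q < p$, and the remaining cases $q \geq p$ are all covered by the preceding step, letting $q$ range over all admissible values yields the monomorphism property against every possible source in $\C$. Hence every morphism with source $x_p$ is a genuine monomorphism in $\C$.

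The third step is simply to let $p$ vary over $\{1, 2, \ldots, n\}$. Since $\Obj\C = \{x_1, \ldots, x_n\}$ exhausts all objects, and every morphism of $\C$ has some $x_p$ as its source, we conclude that every morphism in $\C$ is a monomorphism. There is no real obstacle here: the only subtle point to flag is the interplay between the per-$q$ statement of Lemma~\ref{NEC1} and the categorical definition of monomorphism, which requires injectivity of precomposition against all possible sources; as noted, the ordering convention together with GPT-closedness guarantees we cover every case.
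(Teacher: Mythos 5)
Your proposal is correct and follows exactly the paper's route: the paper's proof is the one-line ``It follows from Proposition~\ref{ED} and Lemma~\ref{NEC1},'' which is precisely your chain (1)$\Rightarrow$(3) of Proposition~\ref{ED} followed by Lemma~\ref{NEC1}, with $p$ and $q$ ranging over all admissible indices. Your extra care about letting $q$ vary so that precomposition is tested against every possible source $x_q$ with $\Homc(x_q,x_p)\neq\emptyset$ is a reasonable clarification of a point the paper leaves implicit.
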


\begin{proof}
It follows from Proposition~\ref{ED} and Lemma~\ref{NEC1}.
\end{proof}

Let $\mathcal{P}$ be a finite poset. We assume that ${\rm Obj}\mathcal{P}=\{x_1,\cdots,x_n\}$ satisfying $x_i\nleq x_j$ if $i<j$, and $\Gamma$ is the corresponding upper triangular matrix algebra. We observe that each entry of $\Gamma$ is $0$ or $k$, and each projective $\Gamma$-module is a direct sum of some $C_i$ for  $1\leq i\leq n$.
 For any $a,b\in {\rm Obj}\mathcal{P}$ satisfying $a\nleq b$ and $b\nleq a$, denote by $L_{a,b}=\{x\in {\rm Obj}\mathcal{P}\mid a<x, b<x\}$.

\begin{exa}\label{TCE}
	{\rm Let $\mathcal{P}$ be a finite poset. Then $\mathcal{P}$ is $GPT$-closed if and only if any two distinct minimal elements in $L_{a,b}$ has no common upper bound for $a,b\in {\rm Obj}\mathcal{P}$ satisfying $a\nleq b$ and $b\nleq a$.

For the ``if" part, assume that any two distinct minimal elements in $L_{a,b}$ has no common upper bound. By Proposition~\ref{ED}, we only need to  prove that $C_t\hat{\otimes} C_n$ is projective for $1\leq t\leq n$, since the general case of $C_t\hat{\otimes} C_j$ can be considered in $\Gamma_{max\{t,j\}}$.

For each $1\leq t\leq n$, if $(C_n)_t=k$, that is, $x_n\leq x_t$, then $(C_t)_i=k$ implies $(C_n)_i=k$ for  $1\leq i\leq t$. Hence we have $C_t\hat{\otimes} C_n\simeq C_t$. Assume that $(C_n)_t=0$, that is, $x_n\nleq x_t$. Let $L'_{x_t,x_n}=\{x_{s_1},\cdots,x_{s_r}\}$ be all distinct minimal elements in $L_{x_t,x_n}$. For each $1\leq i<t$, if $(C_t)_i=k=(C_n)_i$, that is, $x_n\leq x_i, x_t\leq x_i$, then there is a unique $x_{s_l}\in L'_{x_t,x_n}$ satisfying $x_{s_l}\leq x_i$, that is, there is a unique $x_{s_l}\in L'_{x_t,x_n}$ satisfying $(C_{s_l})_i=k$, since any two distinct elements in $L'_{x_t,x_n}$ has no common upper bound. Then we have $C_t\hat{\otimes} C_n\simeq \bigoplus\limits_{l=1}^{r} C_{s_l}$. 

For the ``only if" part, assume that $x_t,x_j\in {\rm Obj}\mathcal{P}$ satisfying $x_t\nleq x_j$ and $x_j\nleq x_t$ and $C_t\hat{\otimes} C_j\simeq \bigoplus\limits_{l=1}^{r} C_{s_l}$. Then each $x_{s_l}\in L_{x_t,x_j}$. Assume that $x_{s_1}$ and $x_{s_2}$ be two distinct minimal elements in $L_{x_t,x_j}$ having a common upper bound $x_i$. Then $(C_t\hat{\otimes} C_j)_i=k$ and $(C_{s_1}\oplus C_{s_2})_i=k\oplus k$, which is a contradiction.} \hfill $\square$
\end{exa} 

\section{Proof of Theorem 1.2}
Let $\C$ be a finite EI category. Recall from ~\cite[Definition
2.3]{LLi} that a morphism $x\overset{\alpha}{\rightarrow} y$ in $\C$
is \emph{unfactorizable} if $\alpha$ is not an isomorphism and
whenever it has a factorization as a composite $x
\overset{\beta}{\rightarrow} z \overset{\gamma}{\rightarrow} y$,
then either $\beta$ or $\gamma$ is an isomorphism. Let
$x\overset{\alpha}{\rightarrow} y$ in $\C$ be an unfactorizable
morphism. Then $h\circ\alpha\circ g$ is also unfactorizable for
every $h \in \Ac(y)$ and every $g \in \Ac(x)$; see \cite[Proposition
2.5]{LLi}. Let $x\overset{\alpha}{\rightarrow} y$ in $\C$ be a
morphism with $x\neq y$. Then it has a decomposition
$x=x_0\overset{\alpha_1}{\rightarrow}
x_1\overset{\alpha_2}{\rightarrow} \cdots
\overset{\alpha_n}{\rightarrow} x_n=y$ with all $\alpha_i$
unfactorizable; see \cite[Proposition 2.6]{LLi}.

Following \cite[Definition 2.7]{LLi}, we say that a finite EI
category $\C$ satisfies the Unique Factorization Property (UFP), if
whenever a non-isomorphism $\alpha$ has two  decompositions into
unfactorizable morphisms:
\[x=x_0\overset{\alpha_1}{\rightarrow}
x_1\overset{\alpha_2}{\rightarrow} \cdots
\overset{\alpha_m}{\rightarrow} x_m=y\] and
\[x=y_0\overset{\beta_1}{\rightarrow}
y_1\overset{\beta_2}{\rightarrow} \cdots
\overset{\beta_n}{\rightarrow} y_n=y,\] then $m=n$, $x_i=y_i$, and
there are $h_i\in \Ac(x_i)$, $1\leq i\leq m-1$, such that the following diagram commutes :
\begin{equation*}
\xymatrix@C=0.5cm{
	x=x_0\ar[r]^{\alpha_1} \ar@{=}[d] & x_1 \ar[r]^{\alpha_2} \ar[d]^{h_1} & x_2 \ar[r]^{\alpha_3} \ar[d]^{h_2} &  \cdots \ar[r]^{\alpha_{m-1}}  & x_{m-1} \ar[r]^{\alpha_m} \ar[d]^{h_{m-1}}& x_m=y \ar@{=}[d] \\
	x=x_0\ar[r]^{\beta_1} & x_1 \ar[r]^{\beta_2} & x_2 \ar[r]^{\beta_3} & \cdots \ar[r]^{\beta_{m-1}}  & x_{m-1} \ar[r]^{\beta_m}& x_m=y
}
\end{equation*}

Let $\C$ be a finite EI category. Following \cite[Section 6]{LLi1},
we say that $\C$ is a finite \emph{free} EI category if it satisfies
the UFP. By ~\cite[Proposition 2.8]{LLi}, this is equivalent to the original definition ~\cite[Definition 2.2]{LLi}.

Let $n\geq 2$. Let $\C$ be a finite projective and free EI category with $\Obj\C=\{x_1,x_2,\cdots,x_n\}$ satisfying
$\Homc(x_i,x_j)=\emptyset$ if $i<j$ and $\Gamma$ be the corresponding upper triangular
matrix algebra of $\C$. Then $\Gamma$ is $1$-Gorenstein; see \cite[Theorem 5.3]{WR}. 

Set $\Ho(x_j,x_i)=\{\alpha\in \Homc(x_j,x_i)\mid \alpha \text{
	is unfactorizable}\}$. Denote by $M^0_{ij}=k\Ho(x_j,x_i)$, which is an $R_i$-$R_j$-sub-bimodule of $M_{ij}$; see~\cite[Notation 4.8]{WR}.  Recall the left
$\Gamma_t$-module $M_t^*$ and the right
$\Gamma'_{n-t}$-module $M_t^{**}$ in Notation~\ref{B}, for $1\leq t\leq n-1$. 
Observe that
$M_t^{**}\simeq (M_{t,t+1}^0,M_{t,t+2}^0,\cdots,M_{tn}^0)
\otimes_{\Gamma'_{D,n-t}}\Gamma_{n-t}'$; compare~\cite[Lemmas 4.10 and 4.11]{WR}, which implies that $M_t^*\simeq \Gamma_t\otimes_{\Gamma_t^D} \lbr M_{1,t+1}^0\\ \vdots \\
M_{t,t+1}^0\rbr $.

Let $X=\lbr X_1\\ \vdots \\
X_n\rbr$ be a left $\Gamma$-module. For each $1\leq t\leq n-1$, we have 
\begin{equation*}
	\begin{split}
		M_t^{**}\otimes_{\Gamma'_{n-t}} \lbr X_{t+1}\\ \vdots \\
		X_n\rbr  & \simeq (M_{t,t+1}^0,M_{t,t+2}^0,\cdots,M_{tn}^0)
		\otimes_{\Gamma'_{D,n-t}}\Gamma_{n-t}'\otimes_{\Gamma'_{n-t}} \lbr X_{t+1}\\ \vdots \\
		X_n\rbr\\& \simeq (M_{t,t+1}^0,M_{t,t+2}^0,\cdots,M_{tn}^0)
		\otimes_{\Gamma'_{D,n-t}} \lbr X_{t+1}\\ \vdots \\
		X_n\rbr\\
		& \simeq \bigoplus_{j=t+1}^n M_{tj}^0\otimes_{R_j}X_j.\\
	\end{split}
\end{equation*}
Recall the $R_t$-map $\varphi_t^{**}$ in Lemma~\ref{GPE}. Here, we observe that 
\[ \varphi_t^{**}: \bigoplus_{j=t+1}^n M_{tj}^0\otimes_{R_j}X_j \rightarrow X_t, \quad \sum_{j=t+1}^n (m_j\otimes x_j)\mapsto  \sum_{j=t+1}^n \varphi_{tj} (m_j\otimes x_j).\]

\begin{lem}\label{M}
	Let $\C$ be a finite projective and free EI category and $\Gamma$ be the corresponding upper triangular
	matrix algebra. Assume $1\leq p\leq q\leq n$. If each morphism in $\bigsqcup_{y\in {\rm Obj}\C}\bigsqcup_{j=1}^p \Hom_\C(x_j,y)$ is a monomorphism, then $C_p\hat{\otimes} C_q\in \Gamma$-{\rm proj}.
\end{lem}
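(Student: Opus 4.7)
The plan is to combine Lemma~\ref{GPE} with the observation that a Gorenstein-projective module of finite projective dimension is projective. First, I would verify that $C_p\hat{\otimes} C_q$ has finite projective dimension, exactly as in the ``(2)$\Rightarrow$(3)'' argument of Proposition~\ref{ED}: since $\C$ is projective, each $M_{ip}$ is projective as an $R_i$-module, and because $R_i$ is a group algebra, the diagonal $R_i$-module $M_{ip}\otimes_k M_{iq}$ is again projective; then \cite[Corollary 3.6]{WR} yields $\pd_{\Gamma}(C_p\hat{\otimes} C_q)<\infty$. Hence it suffices to show $C_p\hat{\otimes} C_q\in\Gamma$-{\rm Gproj}, and by Lemma~\ref{GPE} this reduces to proving that each map $\varphi_t^{**}$ is injective. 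For $t\geq p$ the source is zero, so I may assume $1\leq t<p$.

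In that case, using the identification $M_t^{**}\otimes_{\Gamma'_{n-t}}(-)\cong \bigoplus_{j=t+1}^n M_{tj}^0\otimes_{R_j}(-)_j$ together with Lemmas~\ref{G-Prod} and \ref{Prod}, I obtain a natural $k$-basis for the source: for $t<j<p$, equivalence classes $[\alpha,(\beta,\gamma)]$ with $\alpha\in\Ho(x_j,x_t)$, $\beta\in\Homc(x_p,x_j)$, $\gamma\in\Homc(x_q,x_j)$, modulo the relation $[\alpha h,(\beta,\gamma)]=[\alpha,(h\beta,h\gamma)]$ for $h\in\Ac(x_j)$; and for $j=p$, classes $[\alpha,(g,\gamma)]$ with $\alpha\in\Ho(x_p,x_t)$, $g\in\Ac(x_p)$, $\gamma\in\Homc(x_q,x_p)$, modulo $[\alpha h,(g,\gamma)]=[\alpha,(hg,h\gamma)]$. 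The target $M_{tp}\otimes_k M_{tq}$ has basis $\Homc(x_p,x_t)\times\Homc(x_q,x_t)$, and $\varphi_t^{**}$ sends $[\alpha,(\beta,\gamma)]\mapsto(\alpha\beta,\alpha\gamma)$ and $[\alpha,(g,\gamma)]\mapsto(\alpha g,\alpha\gamma)$. I would establish injectivity by showing that $\varphi_t^{**}$ sends distinct basis elements to distinct basis elements.

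Fix a target basis element $(\delta,\epsilon)$. The UFP of $\C$ forces the source $x_{j_1}$ of the first unfactorizable in any decomposition of $\delta$ to be uniquely determined by $\delta$; consequently, any preimage basis element must lie in the $j=j_1$ summand (a length argument rules out $j\neq j_1$). The UFP further implies that a pair $(\alpha,\beta)$ with $\alpha\in\Ho(x_{j_1},x_t)$ and $\alpha\beta=\delta$ is unique up to precisely the $\Ac(x_{j_1})$-twist appearing in the basis equivalence (in the $j=p$ case, after renormalizing via $[\alpha h^{-1},(hg,h\gamma)]$ I may take the representative $[\delta,(1,\gamma')]$). Since $j_1\leq p$, the hypothesis ensures that $\alpha$ (or $\delta$ in the $j=p$ case) is a monomorphism, and therefore the remaining component $\gamma$ is uniquely determined by the equation $\alpha\gamma=\epsilon$. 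Hence at most one basis element maps to $(\delta,\epsilon)$, giving injectivity of $\varphi_t^{**}$, and the proof is complete.

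The main obstacle I anticipate is the bookkeeping for the $j=p$ summand, whose form $M_{tp}^0\otimes_{R_p}(R_p\otimes_k M_{pq})$ is subtly different from the $j<p$ summands; the key point is to use the relation $[\alpha h,(g,\gamma)]=[\alpha,(hg,h\gamma)]$ to rewrite any basis element canonically as $[\delta,(1,\gamma')]$ with $\alpha g=\delta$ and $\gamma'=g^{-1}\gamma$, reducing it to the same monomorphism argument.
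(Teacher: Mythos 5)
Your proposal is correct and follows essentially the same route as the paper: reduce to the injectivity of each $\varphi_t^{**}$ via Lemma~\ref{GPE} together with the finite-projective-dimension argument from ``(2)$\Rightarrow$(3)'' of Proposition~\ref{ED}, identify $k$-bases of source and target using Lemmas~\ref{G-Prod} and~\ref{Prod}, and then use the UFP plus the monomorphism hypothesis to show the induced map on basis elements is injective. The only cosmetic difference is that the paper treats the $j=p$ summand uniformly by writing $M_{pp}=R_p$, whereas you renormalize its representatives separately.
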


\begin{proof}
	We only need to prove that each $R_t$-map 
	\[\varphi_t^{**}: \bigoplus_{j=t+1}^p M_{tj}^0\otimes_{R_j}(M_{jp}\otimes_k M_{jq}) \rightarrow M_{tp}\otimes_k M_{tq}\] is injective for $1\leq t<p$ by Lemma~\ref{GPE} and Proposition~\ref{ED}.

By Lemmas ~\ref{G-Prod} and \ref{Prod}, we have that the set
$\Hom_\C(x_p,x_t)\times \Hom_\C(x_q,x_t)$ is a $k$-basis of $M_{tp}\otimes_k M_{tq}$, and the set
\[\bigsqcup_{j=t+1}^p \Hom^0_\C(x_j,x_t)\times_{\Ac(x_j)}\left( \Hom_\C(x_p,x_j)\times \Hom_\C(x_q,x_j) \right)=:B\]
is a $k$-basis of  $\bigoplus\limits_{j=t+1}^p M_{tj}^0\otimes_{R_j}(M_{jp}\otimes_k M_{jq})$.

We have the following commutative diagram

\begin{equation*}
	\xymatrix@C=1.5cm{
		B \ar[r]^{\subseteq}
		\ar@{->}[d]^{\varphi_t^{**}\mid_B} &  
		\bigoplus\limits_{j=t+1}^p M_{tj}^0\otimes_{R_j}(M_{jp}\otimes_k M_{jq})  \ar@{->}[d]^{\varphi_t^{**}}\\ 
			\Hom_\C(x_p,x_t)\times \Hom_\C(x_q,x_t) \ar[r]^{\subseteq} & M_{tp}\otimes_k M_{tq}}
\end{equation*}

Observe that $\varphi_t^{**}$ is injective if and only if $\varphi_t^{**}\mid_{B}$ is injective for each $1\leq t<p$.

  Assume that  $\varphi_t^{**}(\alpha,(\beta,\theta))=\varphi_t^{**}(\alpha',(\beta',\theta'))$, where $\alpha\in \Hom^0_\C(x_j,x_t)$, $\beta\in \Hom_\C(x_p,x_j)$, $\theta\in \Hom_\C(x_q,x_j)$ and $\alpha'\in \Hom^0_\C(x_{j'},x_t)$, $\beta'\in \Hom_\C(x_p,x_{j'})$, $\theta'\in \Hom_\C(x_q,x_{j'})$. Then we have $\alpha\beta=\alpha'\beta'$ in $\Hom_\C(x_p,x_t)$ and $\alpha\theta=\alpha'\theta'$ in $\Hom_\C(x_q,x_t)$. Since $\C$ is free and $\alpha,\alpha'$ are unfactorizable, we have that $j=j'$ and there is $g\in \Ac(x_j)$ such that $\alpha=\alpha' g$ and $\beta=g^{-1}\beta'$.
Since $\alpha\theta=\alpha'\theta'=\alpha g^{-1}\theta'$ and $\alpha$ is a monomorphism, we have that $\theta=g^{-1}\theta'$. Then we have that $(\alpha,(\beta,\theta))=(\alpha'g, (g^{-1}\beta',g^{-1}\theta'))=(\alpha',(\beta',\theta'))$, which implies that the map $\varphi_t^{**}\mid_{B}$ is injective.
\end{proof}

\begin{thm}
	Let $\C$ be a finite projective and free EI category. Then the category $\C$ is $GPT$-closed if and only if each morphism in $\C$ is a monomorphism.
\end{thm}
\begin{proof}
The ``if" part is just by Proposition~\ref{ED} and Lemma~\ref{M}. The ``only if" part is just by Proposition~\ref{NECE1}.
\end{proof}

\section*{Acknowledgements}
The author is grateful to her supervisor Professor Xiao-Wu Chen for
his guidance. This work is supported by the National Natural
Science Foundation of China (No.s 11522113 and 11571329).

\end{document}